\documentclass[12pt, reqno]{amsart}
\usepackage{amsmath, amsthm, amscd, amsfonts, amssymb, graphicx, color}
\usepackage[bookmarksnumbered, colorlinks, plainpages]{hyperref}

\usepackage[all]{xy}
\newtheorem{theorem}{Theorem}[section]
\newtheorem{lemma}[theorem]{Lemma}
\newtheorem{proposition}[theorem]{Proposition}
\newtheorem{corollary}[theorem]{Corollary}
\theoremstyle{definition}
\newtheorem{definition}[theorem]{Definition}
\newtheorem{example}[theorem]{Example}

\newtheorem{question}[theorem]{Question}
\theoremstyle{remark}
\newtheorem{remark}[theorem]{Remark}
\numberwithin{equation}{section}

\begin{document}
	\setcounter{page}{1}
	\title[\textit{DW}--$L$-Weakly Compact Operators]{\textit{DW}--$L$-Weakly Compact Operators Between Banach Lattices}
	
    \author[J.X. Chen]
       {Jin Xi Chen}
       \address{School of Mathematics, Southwest Jiaotong University, Chengdu 610031, China}
       \email{jinxichen@swjtu.edu.cn}

	 \author[Q. Wang]
	  {Qiaozhi Wang}
	 \address { School of Mathematics, Southwest Jiaotong University, Chengdu 610031, China}
\email{wangqiaozhi@my.swjtu.edu.cn}


\subjclass[2020] {46B42, 46B50, 47B65}


\keywords{disjointly weakly compact set, $L$-weakly compact set, \textit{DW}--$L$-weakly compact operator, almost $L$-weakly compact operator, order $L$-weakly compact operator, Banach lattice}
	
\begin{abstract}
In this paper we introduce  a class of operators  which we call \textit{DW}--$L$-weakly compact  operators. They carry disjointly weakly compact subsets of a Banach lattice onto $L$-weakly compact sets. The class of \textit{DW}--$L$-weakly compact  operators is closely related to two recently introduced classes of operators: almost $L$-weakly compact operators and order $L$-weakly compact operators. We show that \textit{DW}--$L$-weakly compact  operators are  precisely those operators which are both almost $L$-weakly compact  and order $L$-weakly compact.  The adjoint and the  order structure of \textit{DW}--$L$-weakly compact operators are also considered.
\end{abstract}

\maketitle\baselineskip 5mm

\section{Introduction and preliminaries}
	
	 Throughout this paper, $X, Y$ will denote real Banach spaces, and $E, F$ will denote real Banach lattices. $B_X$ is the closed unit ball of $X$, and $E^+$ is the positive cone of $E$.  The solid hull of a subset $A$ of $E$ is denoted by $Sol(A):=\lbrace y\in E:|y| \leq |x|$ \ for \ some\ $x\in A\rbrace$. The order continuous part of $E$ is denoted by $E^a$.

\par Let us recall some notions and properties of $L$- and $M$-weakly compact operators introduced by P. Meyer-Nieberg \cite{Meyer1974}. A  norm bounded subset $A$ of a Banach lattice $E$ is said to be  \textit{$L$-weakly compact} if $\|x_n\|\rightarrow0$ for every disjoint sequence $(x_{n})$ contained in  $Sol(A)$ (see, e.g., \cite[Definition 3.6.1]{MeyerNieberg1991}). A bounded linear operator $T:X\to E$  is called an \textit{$L$-weakly compact operator} ($T\in \textmd{Lwc} (X,E)$) if $TB_X$ is  $L$-weakly compact in $E$. Every $L\,$-weakly compact set is relatively weakly compact, but the converse does not hold in general. In an $L$-space, $L\,$-weakly compact sets and relatively weakly compact sets coincide. Therefore, when $E$ is an $L$-space, $T:X\to E$ is $L$-weakly compact if and only if it is weakly compact. In duality with $L$-weakly compact operators are $M$-weakly compact operators.  An operator $T:E\to X$ is called an $M$-weakly compact operator ($T\in \textmd{Mwc}(E,X)$) if $\|Tx_n\|\to 0$ for every bounded disjoint sequence $(x_n)$ in $E$. Note that every $M$-weakly compact operator is automatically weakly compact. When $E$ is an $M$-space, $T:E\to X$ is $M$-weakly compact if and only if it is weakly compact. Also, the $L$- and  $M$-weakly compact operators are in duality between themselves. Precisely speaking, $T:X\to E$ is $L$-weakly compact if and only if its adjoint $T^{\prime}$ is $M$-weakly compact, and $T:E\to X$ is  $M$-weakly compact  if and only if its adjoint $T^{\prime}$ is $L$-weakly compact. For more details refer to \cite [Section 5.3]{Aliprantis2006} and \cite[Section 3.6]{MeyerNieberg1991}.

\par In the past decade, other classes of operators associated with $L$-weakly compact sets have  appeared successively. Bouras, Lhaimer and Moussa \cite{Bouras2018, LMB, Lhaimer2021} introduced  almost \textit{L-}(resp.  \textit{M-})weakly compact operators and order \textit{L-}(resp.  \textit{M-})weakly compact operators.

\begin{enumerate}

  \item  $T:X\to E$ is called \textit{ almost $L$-weakly compact} ($T\in \textmd{aLwc}(X, E)$) if $T$ carries relatively weakly compact subsets of $X$ onto $L$-weakly compact subsets of $E$.
  \item  $T:E\to X$  is called\textit{ almost $M$-weakly compact} ($T\in \textmd{aMwc}(E, X)$) if we have $f_{n}(Tx_n)\to 0$ for every bounded disjoint sequence $(x_n)$ in $E$ and every weakly convergent sequence $ (f_n)$ of $X^{\prime}$.
  \item  $T:E\to F$ is called \textit{order $L$-weakly compact} ($T\in \textmd{oLwc}(E,F)$) if $T$ carries order bounded subsets of $E$ onto  $L$-weakly compact  subsets of $F$.
  \item $T:E\to F$ is called \textit{order $M$-weakly compact} ($T\in \textmd{oMwc}(E,F)$) if $f_{n}(Tx_n)\to 0$ for every bounded disjoint sequence $(x_n)$ in $E$ and every order bounded sequence $(f_n)$ in $F^{\prime}$.
\end{enumerate}
 $T\in \textmd{aMwc}(E, X)$ if and only if $T^{\prime}\in \textmd{aLwc}(X^{\prime}, E^{\prime})$, and $T\in \textmd{oMwc}(E,F)$ if and only if $T^{\prime}\in \textmd{oLwc}(F^{\prime},E^{\prime})$. For further properties of aL(M)wc and oL(M)wc operators, refer to a series of papers \cite{Akay B, Alpay2024a, Bouras2018, Elbour,ElFahri2022, LMB, Lhaimer2021, Nouira}.

Recall that a bounded subset $A$ of $X$ is called a \textit{Dunford-Pettis }(resp. \textit{limited}) \textit{set} if every weakly null (resp. weak$^*$-null) sequence $(x^{\prime}_{n})$ of $X^\prime$ converges uniformly to zero on $A$ \cite{DPset, Bourgain1984}. Recently, Alpay, Emelyanov, Gorokhova \cite{Alpay2024b}  introduced two classes of operators which send Dunford-Pettis (resp. limited) sets onto $L$-weakly compact sets, that is, \textit{Dunford-Pettis} (resp. \textit{limitedly }) \textit{$L$-weakly compact operators} (\textit{abbr.} DP-Lwc, l-Lwc). Furthermore,  limitedly $L$-weakly compact operator was earlier considered by Oughajji and Moussa \cite{Ough} under the name of \textit{weak $L$-weakly compact operator}.

\par Observe that weakly compact sets,  order bounded sets, Dunford-Pettis sets and limited sets in Banach lattices belong to  a more general class of sets, the so-called disjointly weakly compact sets. Following W. Wnuk \cite{Wnuk1999}, we  call a  bounded subset $A$ of a Banach lattice $E$ a \textit{disjointly weakly compact set}   if $x_{n}\xrightarrow{w}0$ for every disjoint sequence $(x_n)$ of $Sol(A)$. Clearly, the disjointly weakly compact set is the weak version of the $L$-weakly compact set. The disjointly weak compactness in Banach lattices has been extensively studied by Xiang, Chen and Li \cite{disjointlyweaklycompact}.  Recently, the operators related to disjointly weakly compact sets were also defined. Chen and Feng \cite{ChenDW1, ChenDW2} considered the operators which carry disjointly weakly compact sets onto relatively compact (resp. Dunford-Pettis, limited) sets. That is, a bounded linear operator $T:E\to X$ is called \textit{$DW$-compact} (resp. \textit{$DW$-DP}, \textit{$DW$-limited}) if $T$ carries every disjointly weakly compact subset of $E$ onto a relatively compact (resp. Dunford-Pettis, limited) subset of $X$.

\par Inspired by the work of Bouras et al. \cite{Bouras2018, Lhaimer2021} and Alpay et al. \cite{Alpay2024a, Alpay2024b}, we introduce a new class of operators called\textit{ DW--}$L$-weakly compact (abbr. \textit{DW}-Lwc) operators, which carry disjointly weakly compact sets onto $L$-weakly compact sets. Clearly, every $DW$-Lwc operator is aLwc (resp. oLwc, DP-Lwc, l-Lwc) since weakly compact sets,  order bounded sets, Dunford-Pettis sets and limited sets in Banach lattices are all disjointly weakly compact \cite[Remark 2.4]{disjointlyweaklycompact}. In this paper, we show that $DW$-Lwc operators are indeed precisely those operators which are both aLwc and oLwc operators. Furthermore, we consider the adjoint of \textit{DW}-Lwc operators and the lattice-order structure of the space of \textit{DW}-Lwc operators.

\section{\textit{DW}--$L$-weakly compact operators on Banach lattices}

 Although the disjointly weak compactness is involved in  the lattice-order structure, it is preserved by continuity. The following lemma appeared in \cite[Corollary 0.2]{RdpsetandV*sets} and will be frequently used in the sequel.
  
  \begin{lemma}\label{old lemma}\rm\cite{RdpsetandV*sets}
  Let $T:E\to F$ be a continuous linear operator between two Banach lattices. Then $T(K)$ is likewise disjointly weakly compact for every disjointly weakly compact subset $K$ of $E$.
  \end{lemma}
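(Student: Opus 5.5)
The plan is to work with a dual description of disjoint weak compactness, so that the lattice operations in $F$ are only needed on the functional side, where $T'$ can act.

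\textbf{Dual characterization.} The first step is to record the following, in the spirit of Burkinshaw--Dodds and Meyer-Nieberg's treatment of $L$-weakly compact sets. A bounded set $A\subset E$ is disjointly weakly compact if and only if, for every $0\le f\in E'$ and every disjoint sequence $(f_n)\subset[-f,f]$,
\[
\sup_{x\in A}|f_n(x)|\to 0 .
\]
An equivalent form is that for every $f\ge 0$ and $\varepsilon>0$ there is $u\in E^+$ with $\sup_{x\in A}f\bigl((|x|-u)^+\bigr)\le\varepsilon$.
\begin{itemize}
\item If $(x_n)\subset Sol(A)$ is disjoint and $f(|x_n|)\not\to0$, choose by Riesz--Kantorovich functionals $h_n$ with $|h_n|\le f$, $h_n$ carried by the band generated by $x_n$, and $h_n(x_n)=f(|x_n|)$. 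The $h_n$ are disjoint, so uniform convergence on $A$ fails.
\item Conversely, if disjoint $(f_n)\subset[-f,f]$ do not converge uniformly on $A$, the $u$-truncation form produces disjoint elements of $Sol(A)$ on which $f$ stays large.
\end{itemize}
I will assume this characterization is available, or prove it along these lines.

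\textbf{Reduction to an operator into $\ell^1$.} Let $K\subset E$ be disjointly weakly compact, let $0\le g\in F'$, and let $(f_n)\subset[-g,g]$ be disjoint. I must show $\sup_{x\in K}|f_n(Tx)|\to 0$. Since the $f_n$ are disjoint with $|f_n|\le g$, every finite sum satisfies $\sum_{n\le N}|f_n|\le g$. Hence
\[
\sum_n|f_n(Tx)|\le g(|Tx|)\le\|g\|\,\|T\|\,\|x\| .
\]
So $S:E\to\ell^1$, $Sx=(f_n(Tx))_n$, is a bounded operator, and the goal becomes: the coordinates of $S(K)$ tend to $0$ uniformly. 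In $\ell^1$, by the Schur property, disjointly weakly compact sets, $L$-weakly compact sets and relatively weakly compact sets coincide, and a relatively weakly compact set has uniformly small tails. So it suffices to show that $S(K)$ is relatively weakly compact in $\ell^1$.

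\textbf{Main obstacle and first attempt.} The hard step is showing that an arbitrary bounded operator $S:E\to\ell^1$ maps $K$ onto a relatively weakly compact set, since $S$ need not be regular. Arguing by contradiction, I would use the Kadec--Pe{\l}czy\'nski/Rosenthal disjointification in $\ell^1$. This gives $x_k\in K$, pairwise disjoint finite sets $A_k\subset\mathbb N$ and $\delta>0$ with $\bigl|\sum_{j\in A_k}(Sx_k)_j\bigr|\ge\delta$. Put $h_k=\sum_{j\in A_k}\pm f_j$, with signs chosen so that each sum is evaluated positively. The $h_k$ are again disjoint in $[-g,g]$, and $|h_k(Tx_k)|\ge\delta$.

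This merely recycles the hypothesis, so some genuinely new input is needed to close the argument. The first thing I would try is a sliding-hump/diagonal argument.
\begin{itemize}
\item Each fixed $x\in E$ gives $h_k(Tx)\to0$, because $\sum_k|h_k(Tx)|<\infty$.
\item Pass to a subsequence along which the $T'h_k$ are almost blockwise orthogonal to the earlier $x_k$.
\item Then use the $u$-truncation form of disjoint weak compactness of $K$, with $f$ a suitable majorant built from the $T'h_k$ when $T$ is regular, or from $\sum_k 2^{-k}|T'h_k|$ in general. Choose $u$ with $\sup_{x\in K}f\bigl((|x|-u)^+\bigr)$ small.
\item Split $x_k=(x_k\wedge u)\vee(-u)+r_k$. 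The order bounded parts are handled by $|T'h_k|(u)\to0$, which needs an argument; the remainders $r_k$ are handled by the choice of $u$.
\end{itemize}
Getting a single $u$ that simultaneously controls all the $T'h_k$ is where I expect the real difficulty. If this fails, I would fall back on citing \cite[Corollary 0.2]{RdpsetandV*sets} directly, as the paper does.
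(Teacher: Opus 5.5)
Your proposal does not prove the lemma. The preliminary steps are sound: the dual characterization of disjoint weak compactness, and the reduction to showing that $S(K)$ is relatively compact in $\ell^1$ for $Sx=(f_n(Tx))_n$. But this reduction is an equivalent reformulation, not a simplification. The decisive step is left open and replaced by the citation the paper itself uses, since the paper gives no proof. Two things are concretely missing.

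\emph{First}, you never justify $|T'h_k|(u)\to 0$ for $u\in E^+$, and this cannot follow from lattice manipulations alone. Take $E=C(\Omega)$, $K=B_E=[-\mathbf 1,\mathbf 1]$ (order bounded, hence disjointly weakly compact) and $F=\ell^1$. Then the lemma contains the classical theorem that every operator $C(\Omega)\to\ell^1$ is compact. So an input of that strength is indispensable, and it enters exactly here. The principal ideal $E_u$ with the order-unit norm is an AM-space with unit, hence a $C(\Omega)$, and $B_{E_u}=[-u,u]$. So $S|_{E_u}$ is weakly compact, by Pe{\l}czy\'nski's property (V) since $\ell^1$ contains no $c_0$, and then compact by the Schur property. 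By Riesz--Kantorovich, $|T'f_n|(u)=\sup_{|x|\le u}|f_n(Tx)|\to0$, that is, $|T'f_n|\xrightarrow{w^*}0$ in $E'$.

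\emph{Second}, your truncation step needs a single $u$ with $\sup_{x\in K}|T'h_k|\bigl((|x|-u)^+\bigr)$ small for all $k$ at once. For non-regular $T$ the $|T'h_k|$ have no common majorant in $E'$. The functional $\sum_k 2^{-k}|T'h_k|$ dominates $|T'h_k|$ only up to the factor $2^k$, so this step fails as stated.

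The remedy is to drop the single $u$ and disjointify, as in the proof of Theorem~\ref{thm:main_intersection} ((3)$\Rightarrow$(1)); that argument needs only $|T'f_n|\xrightarrow{w^*}0$.
\begin{enumerate}
\item Take $K$ solid and, after passing to a subsequence of $(f_n)$, pick $x_n\in K^+$ with $|T'f_n|(x_n)\ge\epsilon$.
\item Choose $n_1<n_2<\cdots$ with $4^m|T'f_{n_{m+1}}|\bigl(\sum_{i\le m}x_{n_i}\bigr)<\epsilon/4$. Put $w=\sum_i 2^{-i}x_{n_i}$ and $y_m=\bigl(x_{n_{m+1}}-4^m\sum_{i\le m}x_{n_i}-2^{-m}w\bigr)^+$.
\item For $l>m$ one has $y_l\le(x_{n_{l+1}}-4^l x_{n_{m+1}})^+$ and $y_m\le(x_{n_{m+1}}-2^{-m-l-1}x_{n_{l+1}})^+$. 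Since $4^l\cdot 2^{-m-l-1}\ge 1$, the $y_m$ are pairwise disjoint. Also $0\le y_m\le x_{n_{m+1}}$, and $|T'f_{n_{m+1}}|(y_m)\ge\epsilon/2$ for large $m$.
\item Riesz--Kantorovich gives $z_m$ with $|z_m|\le y_m$ and $|f_{n_{m+1}}(Tz_m)|\ge\epsilon/4$.
\item The disjoint sequence $(z_m)\subseteq K$ is weakly null. Hence $Sz_m\to0$ weakly, and by Schur in norm, in $\ell^1$. So $|f_{n_{m+1}}(Tz_m)|\le\|Sz_m\|_1\to0$, a contradiction.
\end{enumerate}
With these two ingredients your outline becomes a complete proof.
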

  
\begin{definition}
A bounded linear operator $T:E\to F$ between Banach lattices is called a \textit{DW}--$L$-weakly compact (abbr. \textit{DW}-Lwc) operator if  $T(A)$ is $L$-weakly compact in $F$ for  every disjointly weakly compact subset $A$ of $E$.  The collection of all \textit{DW}-Lwc operators from $E$ into $F$ is denoted by $\textit{DW-}\textmd{Lwc}(E,F)$.
\end{definition}
Clearly, every \textit{DW}-comapct operator into $F$ is \textit{DW}-Lwc when $F$ has order continuous norm. On the other hand, if $F$ is a discrete Banach lattice with order continuous norm, then every \textit{DW}-Lwc operator into $F$ is \textit{DW}-compact. 
\par We can characterize \textit{DW}-Lwc operators in terms of $L$-weakly compact operators and disjointly weakly compact operators. Following \cite[Definition 3.1] {disjointlyweaklycompact}, an operator $T:X\to  E$ from a Banach space $X$ into a Banach lattice $E$ is called a \textit{disjointly weakly compact operator} if $T(B_X)$ is a disjointly weakly compact subset of $E$. 

\begin{theorem}\label{characterization of DW-Lwc operators}
		For a bounded linear operator $T:E\rightarrow F$ between Banach lattices the following statements are equivalent:

		\begin{enumerate}
			\item $T:E\rightarrow F$ is a \textit{DW}-Lwc operator.
			
			\item For an arbitrary Banach space $X$ and each disjointly weakly compact operator $S:X\rightarrow E$, $TS$ is an $L$-weakly compact operator.
			\item For each disjointly weakly compact operator $S:\ell^1\rightarrow E$, $TS$ is an $L$-weakly compact operator.
		\end{enumerate}
\end{theorem}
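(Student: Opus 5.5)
We prove the cycle (1)$\Rightarrow$(2)$\Rightarrow$(3)$\Rightarrow$(1). The first two implications are immediate. For (1)$\Rightarrow$(2), let $S:X\to E$ be disjointly weakly compact. Then $S(B_X)$ is a disjointly weakly compact subset of $E$ by definition, so $TS(B_X)=T(S(B_X))$ is $L$-weakly compact in $F$ because $T$ is \textit{DW}-Lwc. That is exactly the statement that $TS$ is $L$-weakly compact. The implication (2)$\Rightarrow$(3) is the special case $X=\ell^1$.

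The substance lies in (3)$\Rightarrow$(1), which I would prove by contraposition. Suppose $A\subset E$ is disjointly weakly compact but $T(A)$ is not $L$-weakly compact. Since $T(A)$ is norm bounded, there is a disjoint sequence $(y_n)\subset Sol(T(A))$ with $\|y_n\|\not\to 0$. Passing to a subsequence, we may assume $\|y_n\|\ge\varepsilon>0$ for all $n$. For each $n$ choose $a_n\in A$ with $|y_n|\le|Ta_n|$. Since $A$ is bounded, the operator
\[
S:\ell^1\to E,\qquad S(\lambda)=\sum_n \lambda_n a_n ,
\]
is well defined and bounded, and $Se_n=a_n$.

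To apply (3) we need $S$ to be disjointly weakly compact, that is, $S(B_{\ell^1})$ must be a disjointly weakly compact set. Now $S(B_{\ell^1})$ lies in the closed absolutely convex hull of $\{a_n\}\subset A$. So the key fact required is that the closed convex (absolutely convex) hull of a disjointly weakly compact set is again disjointly weakly compact. This is the step I expect to be the main obstacle. I would first look for it in Xiang--Chen--Li \cite{disjointlyweaklycompact}. Failing that, I would use a characterization of disjoint weak compactness via functionals: $A$ is disjointly weakly compact iff for every $f\in (E')^+$ and every disjoint sequence $(g_n)$ in $[-f,f]$ we have $\sup_{x\in A}|g_n(x)|\to0$. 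This is the Burkinshaw--Dodds type dual characterization, analogous to the one for $L$-weakly compact sets. Any condition of the form $\sup_{x\in A}|g_n(x)|\to 0$ is clearly stable under passing to the closed absolutely convex hull, since each $g_n$ is a continuous linear functional. Hence $S$ is disjointly weakly compact.

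By (3), $TS$ is then $L$-weakly compact, so $TS(B_{\ell^1})$ is an $L$-weakly compact set. This set contains each $Ta_n=TSe_n$. Since $|y_n|\le|Ta_n|$, the disjoint sequence $(y_n)$ lies in $Sol(TS(B_{\ell^1}))$, and therefore $\|y_n\|\to0$. This contradicts $\|y_n\|\ge\varepsilon$, which completes the proof of (3)$\Rightarrow$(1). Lemma \ref{old lemma} is not strictly needed here, although it could alternatively be used to see that $T(S(B_{\ell^1}))$ remains disjointly weakly compact.
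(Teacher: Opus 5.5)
Your proof is correct and follows essentially the same route as the paper. You use the same operator $S\lambda=\sum_n\lambda_n a_n$ on $\ell^1$, and it is made disjointly weakly compact by the same fact: the closed absolutely convex hull of a disjointly weakly compact set is again disjointly weakly compact. The paper cites exactly this from Theorem 2.3 of Xiang--Chen--Li, whose characterization is the one you sketch: uniform convergence to zero of order bounded disjoint sequences in $E'$. The only cosmetic difference is how $L$-weak compactness of $T(A)$ is tested: the paper uses bounded disjoint sequences $(f_n)\subset F'$ (the dual description), whereas you work directly with disjoint sequences in $Sol(T(A))$. Both versions reach the contradiction through $TSe_n=Ta_n$.
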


\begin{proof}
 It suffices  to prove (3)$\Rightarrow$(1). To this end, let $A$ be a disjointly weakly compact subset of $E$  and let $(f_n)$ be a bounded disjoint sequence in $F^{\prime }$. We have to show that $\sup_{x\in A}|f_{n}(Tx)|\to 0 \,\, (n\to \infty)$. Otherwise, passing to a subsequence if necessary, there would exist some $\epsilon_{0}>0$ and a sequence $(x_n)_{n=1}^{\infty}\subseteq A$ such that $|f_{n}(Tx_{n})|>\epsilon_{0}$. Let us define
  an operator $S:\ell^1\rightarrow E$ by
 $$S\lambda=\sum\limits_{n=1}^{\infty} \lambda_{n}x_{n}, \,\,\, \lambda=(\lambda_n)\in \ell^1 $$Then, $S$ is a disjointly weakly compact operator since $S(B_{\ell^1})\subseteq \overline{\{x_{n}:n\in \mathbb{N}\}^{bc}}$, where $\overline{\{x_{n}:n\in \mathbb{N}\}^{bc}}$, the closed absolutely convex hull of $(x_n)$, is disjointly weakly compact (see \cite[Theorem 2.3]{disjointlyweaklycompact}). Therefore, by our hypothesis,   $TS:\ell^1\rightarrow F$ is an $L$-weakly compact operator. Hence, it follows that
  $$0<\epsilon_{0}<|f_{n}(Tx_{n})|\leq |\langle f_{n}, TSe_{n}\rangle|\leq\sup_{\lambda\in B_{\ell^1}}|\langle f_{n}, TS\lambda\rangle|\to 0 \,\,\, (n\to \infty)$$Here, $(e_n)$ is the standard unit vector basis of $\ell^1$. This leads to a contradiction, which implies that $T$ is \textit{DW}-Lwc.
\end{proof}

We know that every $L$-weakly compact operator between Banach lattices is \textit{DW}-Lwc. The following result tells us when the converse holds.
\begin{theorem} \label{DW-Lwc is Lwc}
	For a  Banach lattice $E$ the following statements are equivalent:
	\begin{enumerate}
        \item $E^{\prime}$ has order continuous norm.
		\item Every \textit{DW}-Lwc operator from $E$ into an arbitrary Banach lattice  is an $L$-weakly compact operator.
		\item Every \textit{DW}-Lwc operator $T:E\to c_0$ is an $L$-weakly compact operator.
    \end{enumerate}
\end{theorem}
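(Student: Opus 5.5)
Plan: prove (1)$\Rightarrow$(2)$\Rightarrow$(3)$\Rightarrow$(1), where (2)$\Rightarrow$(3) is trivial.

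For (1)$\Rightarrow$(2) I would use the known fact (e.g.\ \cite{disjointlyweaklycompact}) that $E^{\prime}$ has order continuous norm if and only if $B_E$ is disjointly weakly compact, i.e.\ every disjoint sequence in $B_E$ is weakly null. This is the classical characterization: $E'$ is order continuous iff every bounded disjoint sequence in $E$ is weakly null. Given this, a \textit{DW}-Lwc operator $T:E\to F$ maps the disjointly weakly compact set $B_E$ onto an $L$-weakly compact set $T(B_E)$. Hence $T$ is $L$-weakly compact.

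For (3)$\Rightarrow$(1) I argue by contraposition. Suppose $E'$ does not have order continuous norm. Then there is a bounded disjoint sequence $(x_n)$ in $E$ that is not weakly null. By a standard result (Meyer-Nieberg / Aliprantis--Burkinshaw, Theorem 2.4.14-type), $\ell^1$ embeds as a lattice copy in $E$. More concretely, there exist a disjoint sequence $(x_n)\subset E^+$ with $\|x_n\|\le 1$ and a $g\in E'^+$ with $g(x_n)\ge \epsilon>0$, and by passing to components one may take a disjoint sequence $(g_n)\subset [0,g]$ in $E'$ with $g_n(x_m)=\delta_{nm}\,g(x_n)$ (roughly). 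I would then define
$$T:E\to c_0,\qquad Tx=(g_n(x))_{n}.$$
This requires $g_n(x)\to 0$ for each $x$, which holds because the $g_n$ are disjoint and order bounded by $g$, so $\sum_n g_n(|x|)\le g(|x|)$. Thus $T$ is bounded and positive into $c_0$. Moreover $T$ is not $L$-weakly compact, since $Tx_n=g(x_n)e_n$ is a disjoint sequence in $T(B_E)$ with norms at least $\epsilon$.

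It remains to show that $T$ is \textit{DW}-Lwc; this is the main obstacle. Let $A\subseteq E$ be disjointly weakly compact. By Lemma \ref{old lemma}, $T(A)$ is disjointly weakly compact in $c_0$. Since $c_0$ is discrete and its dual $\ell^1$ has order continuous norm, I expect that in $c_0$ disjointly weakly compact sets coincide with $L$-weakly compact ones (equivalently, relatively compact ones). To see this, take a disjoint sequence $(y_k)$ in $Sol(T(A))$ that is weakly null. If $\|y_k\|\not\to 0$, then, because disjoint sequences in $c_0$ have disjoint supports, choose points $m_k\in\operatorname{supp} y_k$ with $|y_k(m_k)|\ge\delta$. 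The elements $z_k=\operatorname{sign}(y_k(m_k))\, y_k(m_k)\, e_{m_k}$ lie in $Sol(T(A))$ and are disjoint. Now $\sum_k \pm z_k$ partial sums stay in a multiple of the solid hull, and in $c_0$ the sequence $\sum_{k\le N} e_{m_k}$ is not weakly Cauchy-unbounded. This step needs care. A cleaner route: a disjoint sequence $(y_k)$ in $c_0$ with $\|y_k\|\ge\delta$ that is bounded equals $\delta$ times something dominating $(e_{m_k})$, and its solid hull contains $\delta\sum_{k\in F}e_{m_k}$ only if $A$ is convex. Since solid hulls are not convex, I would instead use the weak-null test against $\ell^1=c_0'$. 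Pick $f=e_{m_k}^*$ individually; this gives no contradiction, so the fallback is to use the characterization that in $c_0$ any weakly null disjoint sequence with $\|y_k\|\ge\delta$ exists (namely $e_k$). The claim therefore fails in general, and the $c_0$-specific argument must instead exploit the structure of $T$.

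So the refined plan is to show directly that $\|Ty_k\|\to 0$ for disjoint $(y_k)$ in $Sol(A)$, using $g_n\le g$. Write $|Ty_k|\le (g_n(|y_k|))_n$, which is bounded above coordinatewise by $g(|y_k|)$. Since $(|y_k|)$ is a disjoint sequence in $Sol(A)$, it is weakly null, so $g(|y_k|)\to 0$. Hence $\|Ty_k\|_\infty\le g(|y_k|)\to 0$. The same estimate applies to disjoint sequences in $Sol(T(A))$, by dominating them through $T(Sol(A))$: for positive $T$ into the atomic space $c_0$, $|z|\le |Tx|\le T|x|$ with $|x|\in Sol(A)$, and one then needs a Riesz-decomposition step to produce disjoint preimages. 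I expect this decomposition, equivalently the characterization of $L$-weak compactness via $\sup_{x\in A}|f_k(Tx)|\to0$ for disjoint bounded $(f_k)\subset \ell^1$, to be the technical heart. Using the dual form, $\sum_n |f_k(n)|\,g_n(|x|)=h_k(|x|)$ with $h_k=\sum_n|f_k(n)|g_n$ disjoint in $[0,Cg]$, and the lemma that disjoint order bounded sequences in $E'$ converge uniformly to zero on disjointly weakly compact sets finishes the argument.
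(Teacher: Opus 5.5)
Your proof is correct, and (1)$\Rightarrow$(2)$\Rightarrow$(3) is exactly the paper's argument. For (3)$\Rightarrow$(1) you take a longer route than the paper.

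The paper argues directly. It takes an arbitrary order bounded disjoint sequence $(x'_n)$ in $E'$ and sets $Tx=(x'_n(x))$. This lands in $c_0$ because such a sequence is weak$^*$-null, which is your estimate $\sum_n g_n(|x|)\le g(|x|)$. It then applies the result that order bounded disjoint sequences in $E'$ converge uniformly to zero on disjointly weakly compact sets to $(x'_n)$ itself. So $\sup_{x\in A}|x'_n(x)|\to0$, meaning $T(A)$ has uniformly vanishing coordinates and is therefore relatively compact, hence $L$-weakly compact, in $c_0$. Hypothesis (3) then makes $T(B_E)$ $L$-weakly compact, hence relatively compact in $c_0$, which is precisely $\|x'_n\|\to0$.

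You differ in two places.

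\emph{The starting point.} You argue by contraposition from a bounded disjoint non-weakly-null sequence in $E$, and use a components lemma to build disjoint $0\le g_n\le g$ with $g_n(x_m)=\delta_{nm}g(x_n)$. This is valid but unnecessary. Failure of order continuity of $E'$ already gives an order bounded disjoint sequence in $E'$ that is not norm null, and that is all your operator needs.

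\emph{The $L$-weak compactness check.} You verify it for $T(A)$ via the dual test with bounded disjoint $(f_k)\subset\ell^1$, dominating $|f_k(Tx)|\le h_k(|x|)$ with $h_k=\sum_n|f_k(n)|g_n$. This works:
\begin{itemize}
\item the series converges absolutely;
\item the $h_k$ are pairwise disjoint, because the $f_k$ have disjoint supports and the $g_n$ are pairwise disjoint;
\item $0\le h_k\le(\sup_j\|f_j\|_1)\,g$;
\item the same key lemma applied to $(h_k)$ on $Sol(A)$, which is still disjointly weakly compact, finishes the argument.
\end{itemize}
Your check avoids identifying $L$-weakly compact subsets of $c_0$ with relatively compact ones. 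Note, though, that $f_k=e_k$ gives $h_k=g_k$, so the paper's one-line check is a special case you could have used directly. Both arguments rest on the same lemma.

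In a write-up, delete the exploratory middle part. The guess that disjointly weakly compact subsets of $c_0$ are $L$-weakly compact is false, as you noticed: $B_{c_0}$ is disjointly weakly compact but contains the disjoint sequence $(e_k)$. Also, the estimate $\|Ty_k\|_\infty\le g(|y_k|)$ for disjoint $(y_k)\subset Sol(A)$ does not by itself control disjoint sequences in $Sol(T(A))$. Only your final paragraph is needed.
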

	
\begin{proof}
\par (1)$\Rightarrow$(2) This follows from the fact that $B_E$ is disjointly weakly compact if and only if  $E^{\prime}$ has order continuous norm.
 \par (2)$\Rightarrow$(3) Obvious.
 \par (3)$\Rightarrow$(1) To prove that $E^{\prime}$ has order continuous norm we have to show that every order bounded disjoint sequence in $E^{\prime}$ converges in norm to zero. To this end, let $(x^{\prime}_n)\subset E^{\prime}$ be an order bounded disjoint sequence. Since $x^{\prime}_{n}\xrightarrow{w}0$, we can define an operator $T:E\to c_0$ as $Tx=(x^{\prime}_{n}(x))$ for $x\in E$. We claim that $T$ is \textit{DW}-Lwc. Indeed, let $A$ be an arbitrary disjointly weakly compact subset of $E$. From \cite[Theorem 2.3]{disjointlyweaklycompact} it follows that $\sup_{x\in A}|x^{\prime}_{n}(x)|\to 0$. This implies that $T(A)$ is relatively norm compact and hence $L$-weakly compact in $c_0$. That is, $T$ is a \textit{DW}-Lwc operator. By our hypothesis, $T$ is $L$-weakly compact. Therefore, $T(B_E)$ is $L$-weakly compact and hence relatively compact in $c_0$. It follows that $\|x^{\prime}_{n}\|\to 0\,\,\, (n\to \infty)$.
 \end{proof}
 
 Let us recall that a subset $A$ of  $E$ is said to be an \textit{approximately order bounded set} if, for every $\varepsilon > 0$, there exists $u\in E^+$ such that $A\subseteq[-u, u] + \varepsilon B_E$. Also, a bounded linear operator $T:X\to E$  is called \emph{semi-compact} if $T(B_X)$ is approximately order bounded in $E$. We know that every $L$-weakly compact set is approximately order bounded  \cite[Proposition 3.6.2]{MeyerNieberg1991}, and hence every $L$-weakly compact operator is semi-compact. On the other hand, if $E$ has order continuous norm, then every semi-compact operator $T:X\to E$ is $L$-weakly compact.

  \begin{example}
  
  \begin{enumerate}
  \item  Consider the natural inclusion operator $J_{\ell_{1}, c_0}:\ell_1 \to c_0$. Since every disjointly weakly compact subset of $\ell_1$ is relatively compact \cite[Theorem 2.6]{ChenDW1}, $J_{\ell_{1}, c_0}$ is \textit{DW}-Lwc. However, $J_{\ell_{1}, c_0}$ is not semi-compact.
                    
  \item  Consider the natural inclusion operator $J_{\ell_{p},\ell_\infty } : \ell_p \to \ell_\infty$ \,\, $(1<p<\infty)$. Clearly, $J_{\ell_{p},\ell_\infty }$  is semi-compact. However, $J_{\ell_{p},\ell_\infty }$ is not \textit{DW}-Lwc. Indeed, $B_{\ell_{p}}$ is disjointly weakly compact, and $(J_{\ell_{p},\ell_\infty }(e_n))_{n=1}^{\infty}$ is a disjoint sequence in the solid hull of $J_{\ell_{p}, \ell_\infty }(B_{\ell_{p}})$, but $(J_{\ell_{p}, \ell_\infty }(e_n))_{n=1}^{\infty}$ does not converge in norm to zero in $\ell_\infty$.
\end{enumerate}
\end{example} 
 
  Now we give some conditions under which a \textit{DW}-Lwc (resp. semi-compact) operator is semi-compact (resp. \textit{DW}-Lwc). As an immediate consequence of  Theorem \ref{DW-Lwc is Lwc} we have the following corollary.
   
\begin{corollary}\label{thm:domain-semicompact}
For a Banach lattice $E$ the following statements are equivalent:
	\begin{enumerate}
		\item $E^{\prime}$ has order continuous norm.
		\item For every Banach lattice $F$, every \textit{DW}-Lwc operator $T:E\to F$ is semi-compact.
		\item Every \textit{DW}-Lwc operator $T:E\to c_0$ is semi-compact.
	\end{enumerate}
\end{corollary}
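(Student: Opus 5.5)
The plan is to follow the cycle (1)$\Rightarrow$(2)$\Rightarrow$(3)$\Rightarrow$(1), leaning on Theorem \ref{DW-Lwc is Lwc} for both nontrivial implications.

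For (1)$\Rightarrow$(2), assume $E^{\prime}$ has order continuous norm and let $T:E\to F$ be \textit{DW}-Lwc. By Theorem \ref{DW-Lwc is Lwc}, $T$ is $L$-weakly compact, so $T(B_E)$ is an $L$-weakly compact subset of $F$. Every $L$-weakly compact set is approximately order bounded \cite[Proposition 3.6.2]{MeyerNieberg1991}, hence $T$ is semi-compact. The implication (2)$\Rightarrow$(3) is trivial with $F=c_0$.

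For (3)$\Rightarrow$(1), I would reduce to condition (3) of Theorem \ref{DW-Lwc is Lwc}. So let $T:E\to c_0$ be \textit{DW}-Lwc. By hypothesis $T$ is semi-compact, that is, $T(B_E)$ is approximately order bounded in $c_0$. Since $c_0$ has order continuous norm, the remark preceding the examples applies: every semi-compact operator into a Banach lattice with order continuous norm is $L$-weakly compact. Hence $T$ is $L$-weakly compact. This holds for every \textit{DW}-Lwc $T:E\to c_0$, so Theorem \ref{DW-Lwc is Lwc} gives that $E^{\prime}$ has order continuous norm.

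The only point needing care is the fact that approximately order bounded sets in an order continuous Banach lattice are $L$-weakly compact. This is the standard argument behind the quoted remark. Given a disjoint sequence $(y_n)$ in the solid hull of $A\subseteq[-u,u]+\varepsilon B_E$, each $|y_n|$ splits as a piece below $u$ plus a piece of norm at most $\varepsilon$; one can take the piece below $u$ to be $|y_n|\wedge u$, whose remainder is dominated appropriately. The pieces $|y_n|\wedge u$ form a disjoint order bounded sequence, so they tend to zero in norm by order continuity. Since the paper states this remark explicitly, I would simply cite it, and I expect no real obstacle.
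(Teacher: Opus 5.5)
Your proposal is correct and is exactly the argument the paper has in mind when it calls this corollary an immediate consequence of Theorem \ref{DW-Lwc is Lwc}. For (1)$\Rightarrow$(2) you use that \textit{DW}-Lwc implies $L$-weakly compact, which implies semi-compact; for (3)$\Rightarrow$(1) you use the order continuity of $c_0$ to upgrade semi-compact to $L$-weakly compact and then invoke condition (3) of that theorem. Your sketch of why approximately order bounded sets in an order continuous lattice are $L$-weakly compact (writing $|y_n|=|y_n|\wedge u+(|y_n|-u)^+$ with $(|y_n|-u)^+\le|b_n|$) is also sound.
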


\begin{theorem}\label{thm:codomain-semicompact}
	Let $E$ be a Banach lattice. Then $E$ has order continuous norm if and only if, for every Banach lattice $G$, every semi-compact operator $T:G\to E$ is a \textit{DW-}Lwc operator.
\end{theorem}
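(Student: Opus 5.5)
We argue the two directions separately.

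\emph{Sufficiency.} Suppose $E$ has order continuous norm and $T:G\to E$ is semi-compact. Let $A\subseteq G$ be disjointly weakly compact; in particular $A$ is bounded. Then $T(A)\subseteq \lambda T(B_G)$ for some $\lambda>0$, so $T(A)$ is approximately order bounded in $E$. An approximately order bounded set in a Banach lattice with order continuous norm is $L$-weakly compact. Indeed, let $(y_n)$ be a disjoint sequence in $Sol(T(A))$ and fix $\varepsilon>0$. Choose $u\in E^+$ with $T(A)\subseteq[-u,u]+\varepsilon\lambda B_E$. Each $|y_n|\le |x_n|$ with $x_n\in T(A)$, and the Riesz decomposition gives $|y_n|\le u+w_n$ with $\|w_n\|\le\varepsilon\lambda$. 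Hence
$$|y_n| = |y_n|\wedge u + (|y_n|-u)^+,\qquad \|(|y_n|-u)^+\|\le\varepsilon\lambda .$$
The sequence $(|y_n|\wedge u)$ is disjoint and order bounded, so it is norm null by order continuity. Therefore $\limsup\|y_n\|\le\varepsilon\lambda$ for every $\varepsilon>0$, and so $\|y_n\|\to0$. (Alternatively, one may cite the remark preceding the examples: semi-compact operators into $E$ with order continuous norm are $L$-weakly compact, hence \textit{DW}-Lwc.) So sufficiency is routine.

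\emph{Necessity.} Assume every semi-compact operator into $E$ is \textit{DW}-Lwc, and suppose for contradiction that $E$ does not have order continuous norm. Then there is an order bounded disjoint sequence $(x_n)$ with $0\le x_n\le u$ and $\|x_n\|=1$. Take $G=E$ and $T=I_E$? The identity is semi-compact only when $B_E$ is approximately order bounded, which fails in general, so we need a better operator. Instead take $G=\ell^1$ and define
$$S\lambda=\sum_n \lambda_n x_n,\qquad \lambda=(\lambda_n)\in\ell^1 .$$
Since $|S\lambda|\le \sum_n|\lambda_n|\,x_n\le \|\lambda\|_1\, u$, the image $S(B_{\ell^1})$ is order bounded, and $S$ is semi-compact. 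By hypothesis $S$ is \textit{DW}-Lwc. Now we need a disjointly weakly compact set $A\subseteq\ell^1$ whose image is not $L$-weakly compact. Using $B_{\ell^1}$ fails, since disjoint sequences in $\ell^1$ are not weakly null. Instead take $A=\{e_n\}$, where $(e_n)$ is the unit vector basis. The solid hull of $A$ consists of multiples $te_n$ with $|t|\le1$. Any disjoint sequence in $Sol(A)$ therefore has terms supported on distinct coordinates, up to zero terms. Such a sequence need not be weakly null in $\ell^1$ (pair it with $(1,1,\dots)\in\ell^\infty$), so $\{e_n\}$ is not disjointly weakly compact. This is the main obstacle: the domain lattice $\ell^1$ has too few disjointly weakly compact sets.

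To overcome this, choose a domain where order bounded sets are disjointly weakly compact (noted in the introduction), such as $G=\ell^\infty$ or $G=E$ itself. Take $G=E$ and define $T:E\to E$ by $Tx=P_u x$? More simply, consider the order interval $[-u,u]\subseteq E$. It is order bounded, hence disjointly weakly compact, and it lies in the domain of any operator $T:E\to E$ that is semi-compact and fixes it. Use the positive operator
$$T=\sum_n f_n\otimes x_n$$
with suitable functionals, or better the band projection setting. Cleanest is $G=\ell^\infty$ with
$$R\lambda=\sum_n\lambda_n x_n,$$
defined as an order-convergent sum. To avoid convergence issues, restrict to $G=c_0$ instead. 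There $R:c_0\to E$ is well defined, since finite sums satisfy $|\sum\lambda_n x_n|=\sum|\lambda_n|x_n\le \|\lambda\|_\infty u$. Extending by continuity requires norm convergence, which can fail, and this is the obstacle. Handle it instead by taking $G$ to be the ideal of $E$ generated by $u$, normed as the AM-space $E_u$ with unit $u$. The inclusion $E_u\to E$ is continuous and maps $B_{E_u}=[-u,u]$ onto an order bounded set, so it is semi-compact. The set $B_{E_u}=[-u,u]$ is order bounded in $E_u$, hence disjointly weakly compact there. Its image $[-u,u]$ contains the disjoint sequence $(x_n)$ with $\|x_n\|=1$, so the image is not $L$-weakly compact. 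This contradiction shows that $E$ has order continuous norm.
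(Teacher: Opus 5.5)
Your final argument is correct and is essentially the paper's: sufficiency is the standard fact (which you verify directly) that approximately order bounded sets are $L$-weakly compact when the norm is order continuous. Necessity uses exactly the paper's inclusion $E_u\hookrightarrow E$ with $B_{E_u}=[-u,u]$ disjointly weakly compact, the only cosmetic difference being that you conclude via a disjoint sequence $0\le x_n\le u$ with $\|x_n\|=1$ rather than via weak compactness of $[-u,u]$.

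One discarded aside is mistaken but harmless: the $c_0$ route does not fail. The bound $\|\sum_{n=M}^{N}\lambda_n x_n\|\le \max_{M\le n\le N}|\lambda_n|\,\|u\|$ makes the series norm convergent for $\lambda\in c_0$, so $R:c_0\to E$ together with the disjointly weakly compact set $B_{c_0}$ would have worked just as well.
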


\begin{proof}
	If $E$ has order continuous norm, then every semi-compact operator $T:G\to E$ is $L$-weakly compact, and hence it is \textit{DW-}Lwc.

\par For the converse, it suffices to show that $[-u,u]$ is a weakly compact subset of $E$ for every $u\in E^+$. Denote by $E_u$  the principal ideal generated by $u$ in $E$, equipped with the order unit norm. We know that $E_u$ is an \textit{AM}-space with a unit.  $B_{E_u}=[-u,u]$ is disjointly weakly compact  in $E_u$. Now, consider the natural inclusion  $T : E_{u} \to E$. Obviously, $T$ is semi-compact, and hence $T$ is, by our hypothesis, a \textit{DW}-Lwc operator. Therefore,  $T(B_{E_u}) = [-u, u]$ is an $L$-weakly compact set in $E$, and hence $[-u,u]$ is weakly compact. That is, the norm on $E$ is order continuous.
\end{proof} 
 
   Hence, from Theorem \ref{DW-Lwc is Lwc}, Corollary \ref{thm:domain-semicompact} and Theorem \ref{thm:codomain-semicompact}, we can easily see that if both $E^{\prime}$ and $F$ have order continuous norms, then the classes of \textit{DW}-Lwc operators, semi-compact operators, and $L$-weakly compact operators coincide.

\par Recall that a Banach lattice $E$ has the\textit{ positive Schur property} if $0\leq x_{n}\xrightarrow{w}0$ in $E$ implies $\|x_n\|\to0$, equivalently if every relatively weakly compact subset of $E$ is $L$-weakly compact \cite[Corollary 3.6.8]{MeyerNieberg1991}. In \cite[Corollary 2.7]{disjointlyweaklycompact} it was proved that $E$ has the positive Schur property if and only if every disjointly weakly compact subset of $E$ is $L$-weakly compact. Therefore, a Banach lattice $E$ has the positive Schur property if and only if the identity operator $I$ on $E$ is \textit{DW}-Lwc. Generally, we have the following result on the positive Schur property. 

\begin{theorem}
A Banach lattice $E$ has the positive Schur property if and only if every bounded linear operator from an arbitrary Banach lattice $G$ into $E$ is \textit{DW}-Lwc.
\end{theorem}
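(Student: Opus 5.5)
Both implications reduce to facts already recorded just before the statement; the converse uses the identity operator as a test case.

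For the forward direction, assume $E$ has the positive Schur property and let $T:G\to E$ be any bounded linear operator from a Banach lattice $G$. Let $A\subseteq G$ be disjointly weakly compact.
\begin{enumerate}
\item By Lemma \ref{old lemma}, $T(A)$ is a disjointly weakly compact subset of $E$.
\item By \cite[Corollary 2.7]{disjointlyweaklycompact}, in a Banach lattice with the positive Schur property every disjointly weakly compact set is $L$-weakly compact.
\item Hence $T(A)$ is $L$-weakly compact, so $T$ is \textit{DW}-Lwc.
\end{enumerate}
The substance of this direction lies entirely in Lemma \ref{old lemma}, which lets us pass the disjointly weakly compact property through an arbitrary (not necessarily positive) operator.

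For the converse, take $G=E$ and let $T=I$ be the identity operator on $E$. By hypothesis $I$ is \textit{DW}-Lwc. This means every disjointly weakly compact subset of $E$ is $L$-weakly compact. By \cite[Corollary 2.7]{disjointlyweaklycompact}, as noted in the remark preceding the theorem, this is equivalent to $E$ having the positive Schur property.

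Should one prefer a self-contained argument for this step, one could argue directly. Take $0\le x_n\xrightarrow{w}0$ in $E$. Then $\{x_n\}$ is relatively weakly compact, hence disjointly weakly compact. Applying the hypothesis to the identity shows $\{x_n\}$ is $L$-weakly compact. \cite[Corollary 3.6.8]{MeyerNieberg1991} then gives $\|x_n\|\to 0$.

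No real obstacle is expected; the only point needing care is citing Lemma \ref{old lemma} in the forward direction.
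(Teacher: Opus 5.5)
Your proof is correct and is essentially the paper's argument: Lemma \ref{old lemma} combined with \cite[Corollary 2.7]{disjointlyweaklycompact} for the forward direction, and the identity operator on $E$ for the converse. Your optional self-contained variant of the converse also works. It reads more cleanly if you say that every relatively weakly compact subset of $E$ is disjointly weakly compact, hence $L$-weakly compact by the hypothesis, which is exactly the characterization of the positive Schur property in \cite[Corollary 3.6.8]{MeyerNieberg1991}.
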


\begin{proof} Assume first that $E$ has the positive Schur property. Let $T:G\to E$ be a bounded linear operator and let $A$ be a disjointly weakly compact subset of $G$. Then, from Lemma \ref{old lemma} it follows that $T(A)$ is likewise disjointly weakly compact in $E$. Therefore, $T(A)$ is $L$-weakly compact in $E$ since $E$ has the positive Schur property. This implies that $T$ is \textit{DW}-Lwc.

\par For the converse, consider the identity operator $I:E\to E$.
\end{proof}

It is obvious that every $DW$-Lwc operator is aLwc and oLwc. The following result shows that $DW$-Lwc operators are indeed precisely those operators which are both aLwc and oLwc. That is, $\textit{DW-}\textmd{Lwc}(E, F)=\textmd{aLwc}(E, F)\cap\textmd{oLwc}(E, F)$.
	
\begin{theorem}\label{thm:main_intersection} For a bounded linear operator $T:E\to F$ between Banach lattices the following assertions are equivalent:
\begin{enumerate}
  \item  $T$ is a $DW$-Lwc operator.
  \item  $T(Sol(A))$ is an $L$-weakly compact subset of $F$ for every relatively weakly compact subset $A$ of $E$.
  \item  $T$ is an  almost $L$-weakly compact (aLwc) and order $L$-weakly compact (oLwc) operator.
\end{enumerate}

\end{theorem}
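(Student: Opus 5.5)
The plan is to prove (1)$\Rightarrow$(2)$\Rightarrow$(3)$\Rightarrow$(1). The first two implications are essentially formal; the last carries the real content.

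\textbf{(1)$\Rightarrow$(2).} Let $A\subseteq E$ be relatively weakly compact. By the classical fact that every disjoint sequence in the solid hull of a relatively weakly compact set is weakly null (Aliprantis--Burkinshaw), $Sol(A)$ is disjointly weakly compact. Since $Sol(Sol(A))=Sol(A)$, hypothesis (1) gives that $T(Sol(A))$ is $L$-weakly compact.

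\textbf{(2)$\Rightarrow$(3).} For almost $L$-weak compactness, note $T(A)\subseteq T(Sol(A))$ and that subsets of $L$-weakly compact sets are $L$-weakly compact. For order $L$-weak compactness, every order bounded set lies in some $[-u,u]=Sol(\{u\})$, and $\{u\}$ is weakly compact.

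\textbf{(3)$\Rightarrow$(1).} I would argue by contradiction, as in the proof of Theorem \ref{characterization of DW-Lwc operators}. Suppose $A$ is disjointly weakly compact and $(f_n)$ is a bounded disjoint sequence in $F'$ with $\sup_{x\in A}|f_n(Tx)|\not\to0$. After passing to subsequences, there are $\epsilon_0>0$ and $x_n\in A$ with $|g_n(x_n)|>\epsilon_0$, where $g_n:=T'f_n\in E'$. The two hypotheses translate into dual statements about $(g_n)$:
\begin{itemize}
\item Order $L$-weak compactness says $\sup_{|x|\le u}|g_n(x)|=|g_n|(u)\to0$ for every $u\in E^+$, i.e.\ $|g_n|\xrightarrow{w^*}0$.
\item Almost $L$-weak compactness says $g_n\to0$ uniformly on every relatively weakly compact subset of $E$. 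In particular $g_n(y_n)\to0$ for every weakly convergent sequence $(y_n)$ in $E$.
\end{itemize}
The task is to show that these two facts force $g_n(x_n)\to0$ whenever $(x_n)$ lies in a disjointly weakly compact set. This is where I expect the main difficulty.

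To overcome it, I would use a decomposition of sequences in $Sol(A)$ of Dodds--Fremlin/Kadec--Pe{\l}czy\'nski type. Since $|g_n|\xrightarrow{w^*}0$, an inductive construction (Aliprantis--Burkinshaw, Theorem 4.36 style) yields a subsequence and elements $u_k\in E^+$ such that $x_{n_k}$ splits as
\[
x_{n_k}=w_k+v_k+r_k ,
\]
where:
\begin{itemize}
\item $w_k$ is a piece order bounded by some $u$, so $|g_{n_k}(w_k)|\le|g_{n_k}|(u)\to0$;
\item $(v_k)$ is a disjoint sequence in $Sol(A)$, hence weakly null because $A$ is disjointly weakly compact, so almost $L$-weak compactness gives $g_{n_k}(v_k)\to0$;
\item $\|r_k\|$ is small.
\end{itemize}
This contradicts $|g_{n_k}(x_{n_k})|>\epsilon_0$. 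The delicate point is producing the disjoint pieces $v_k$ inside $Sol(A)$ while keeping the order bounded remainder controlled by the $w^*$-nullity of $|g_n|$. I would first try the standard truncation $v_k=(|x_{n_k}|-4^k\sum_{i<k}|x_{n_i}|-2^{-k}u)^+$, adapted from that proof. Failing that, I would use the criterion from \cite[Theorem 2.3]{disjointlyweaklycompact}: uniform convergence on $A$ against order bounded disjoint functionals, applied to disjoint functionals extracted from the $|g_n|$.
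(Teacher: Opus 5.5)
Your proposal is correct and follows the paper's proof. The implications (1)$\Rightarrow$(2)$\Rightarrow$(3) are the same formal observations. For (3)$\Rightarrow$(1) the paper likewise uses oLwc to get $|T'f_n|\xrightarrow{w^*}0$, runs the Aliprantis--Burkinshaw truncation along an inductively chosen subsequence to extract disjoint pieces of the $x_n$ inside the solid set $A$, notes these pieces are weakly null because $A$ is disjointly weakly compact, and kills them with aLwc.

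Two points need fixing when you write it out:
\begin{enumerate}
\item The order bound must be $k$-dependent, as in your truncation formula (your first bullet's single $u$ contradicts it). Take $u_k=4^k\sum_{i<k}|x_{n_i}|$ and choose $n_k$ so that $|g_{n_k}|(u_k)<2^{-k}$. A fixed $u$ cannot work in general, e.g.\ for $x_n=\sum_{i\le n}e_i$ in $c_0$.
\item Your $v_k$ is a piece of $|x_{n_k}|$, not of $x_{n_k}$. So first reduce to $x_n\ge 0$ by replacing $A$ with $Sol(A)$ and $x_n$ with $x_n^{+}$ or $x_n^{-}$. The paper instead handles signs via the Riesz--Kantorovich formula.
\end{enumerate}
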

	
\begin{proof} (1)$\Rightarrow$(2)  This is obvious because every relatively weakly compact subset of a Banach lattice is disjointly weakly compact \cite[Theorem 4.34]{Aliprantis2006}.

\par (2)$\Rightarrow$(3)  We can easily see that $T$ is an oLwc operator since every order bounded set is contained in the solid hull of a singleton.

\par (3)$\Rightarrow$(1)  To this end,  let $A$ be a disjointly weakly compact subset of $E$, and let $(g_n)$ be a bounded disjoint sequence in $F^{\prime}$. By definition, we can assume without loss of generality that $A$ is solid. We have to show that $\sup_{x\in A}|g_{n}(Tx)|=\sup_{x\in A}|\langle |T^{\prime}g_{n}|, x\rangle|\to0$ \, ($n\to\infty$). Otherwise, passing to a subsequence if necessary, there would exist some $\epsilon_{0}>0$ and a sequence $(x_n)_{n=1}^{\infty}\subseteq A^+$ such that $|T^{\prime}g_{n}|x_{n}>\epsilon_{0}>0$. Clearly, $|T^{\prime}g_n|\xrightarrow{w^*}0$ in $E^{\prime}$ since $T$ is oLwc. Hence, as we did in \cite[Lemma 2.2]{Xiang2022} we can ﬁnd a strictly increasing subsequence $(n_m)$
of $\mathbb{N}$ and a disjoint sequence $(y_m)\subset A^+$ satisfying

$$0\leq y_{m}\leq x_{n_{m+1}}\,\, \textmd{and}\,\,\,\, |T^{\prime}g_{n_{m+1}}|y_{m}\geq \epsilon_{0} \,\textmd{for}\, m \,\textmd{sufficiently \,large}$$Applying the Riesz-Kantorovich formula, we can pick a sequence $(z_m)\subset A$ such that

 $$|z_{m}|\leq y_m\,\, \textmd{and} \,\,|(T^{\prime}g_{n_{m+1}})z_{m}|=|g_{n_{m+1}}(Tz_{m})|\geq \epsilon_{0} \,\,\,\textmd{when}\,\, m \,\, \textmd{is}\,\,\textmd{sufficiently \,large}$$Clearly,  $(z_m)$ is a disjoint sequence from $A$, and hence $z_{m}\xrightarrow{w}0$ since $A$ is disjointly weakly compact. Therefore, it follows that $g_{n_{m+1}}(Tz_{m})\to 0$\,\, ($m\to\infty$) because, by our hypothesis, $T$ is aLwc. This leads to a contradiction.
\end{proof}
\begin{remark}
In view of the equivalence of (1) and (2) in Theorem \ref{thm:main_intersection}, one can ask\textit{ whether or not a disjointly weakly compact subset of $E$ is necessarily contained in the solid hull of a relatively weakly compact set}. Here, the authors are not able to answer it.
\end{remark}

We let  $\textit{DW-}\textmd{Lwc}_+(E, F)$, $\textmd{aLwc}_+(E, F)$ and $\textmd{oLwc}_+(E, F)$ denote the collection of all positive $DW$-Lwc operators, positive aLwc operators and positive oLwc operators from $E$ into $F$, respectively.

\begin{corollary}\label{positive DW-Lwc}
    $\textit{DW-}\textmd{Lwc}_+(E, F) = \textmd{aLwc}_+(E, F)$.
\end{corollary}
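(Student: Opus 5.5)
By Theorem \ref{thm:main_intersection}, $\textit{DW-}\textmd{Lwc}(E,F)=\textmd{aLwc}(E,F)\cap\textmd{oLwc}(E,F)$. Every \textit{DW}-Lwc operator is aLwc, which gives the inclusion $\textit{DW-}\textmd{Lwc}_+(E,F)\subseteq\textmd{aLwc}_+(E,F)$. So the whole task is the reverse inclusion: \emph{every positive aLwc operator $T:E\to F$ is automatically oLwc}. Once that is established, (3)$\Rightarrow$(1) of Theorem \ref{thm:main_intersection} finishes the proof.

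To show that a positive aLwc operator $T$ is oLwc, it is enough to prove that $T[-u,u]$ is $L$-weakly compact for each $u\in E^+$. Positivity gives $T[-u,u]\subseteq[-Tu,Tu]$, so the plan is to prove that the order interval $[-Tu,Tu]$ in $F$ is $L$-weakly compact. Equivalently, $\sup_{|y|\le Tu}|g_n(y)|=|g_n|(Tu)\to0$ for every bounded disjoint sequence $(g_n)\subset F'$.

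Since $|g_n|(Tu)=\langle T'|g_n|,u\rangle$, the natural move is to use the singleton $\{u\}$, which is weakly compact. Because $T$ is aLwc, the set $\{Tu\}$ is $L$-weakly compact. By definition, the solid hull of an $L$-weakly compact set contains no disjoint sequence bounded away from zero in norm. Moreover, $[-Tu,Tu]=Sol(\{Tu\})$.

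So the step that needs checking is that $L$-weak compactness of a set $B$ is defined through its solid hull. For $B=\{Tu\}$ this says exactly that every disjoint sequence in $[-Tu,Tu]$ is norm null, which is the statement that $[-Tu,Tu]$ is $L$-weakly compact. Hence $T[-u,u]\subseteq Sol(T\{u\})$ is $L$-weakly compact, and $T$ is oLwc.

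The only place positivity is used is the inclusion $T[-u,u]\subseteq[-Tu,Tu]$. For a general operator, $T[-u,u]$ need not lie in the solid hull of a single image of a weakly compact set, so this is the essential step. I do not expect further obstacles: the argument reduces to checking that the dual characterization $|g_n|(Tu)\to0$ for bounded disjoint $(g_n)$ matches the solid-hull definition, which is standard (\cite[Proposition 3.6.2]{MeyerNieberg1991}).
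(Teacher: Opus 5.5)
Your proof is correct and follows the paper's route: Theorem~\ref{thm:main_intersection} reduces the claim to the inclusion $\textmd{aLwc}_+(E,F)\subseteq\textmd{oLwc}_+(E,F)$. The paper simply cites Alpay and Gorokhova's Proposition 3.2 for that inclusion, whereas you prove it directly via $T[-u,u]\subseteq[-Tu,Tu]=Sol(\{Tu\})$ with $\{u\}$ weakly compact, which is a valid self-contained argument.
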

	
\begin{proof}
	By Theorem \ref{thm:main_intersection}, we have $\textit{DW-}\textmd{Lwc}_{+}(E, F) = \textmd{oLwc}_{+}(E, F) \cap \textmd{aLwc}_{+}(E, F)$. Recently, Alpay and Gorokhova \cite[Proposition 3.2]{Alpay2024a} showed that  $\textmd{aLwc}_+(E, F) \subseteq \textmd{oLwc}_+(E, F)$ holds. Consequently, $\textit{DW-}\textmd{Lwc}_+(E, F) = \textmd{aLwc}_+(E, F)$.
\end{proof}

\begin{remark}
El Fahri, Khabaoui and H'michane \cite{ElFahri2022} introduced a class of operators which they called  order almost $L$-weakly compact operators. An operator $T:E\to F$ between Banach lattices is called \textit{order almost $L$-weakly compact} if $T$ sends each order bounded weakly compact set onto an $L$-weakly compact set. Let $\textmd{o-aLwc}(E,F)$ (resp. $\textmd{l-Lwc}(E,F) $ ) denote the collection of all order almost $L$-weakly compact (resp. limitedly $L$-weakly compact) operators from $E$ into $F$. Then we have the following implications:
$$\textmd{aLwc}(E, F)\cup\textmd{oLwc}(E, F)\subseteq \textmd{o-aLwc}(E,F)\subseteq \textmd{l-Lwc}(E,F)$$since every singleton in $E$ is clearly an order bounded weakly compact set. If $0\leq T\in \textmd{o-aLwc}(E,F)$, then, by \cite[Proposition 3.2]{Alpay2024a}, we have $T\in\textmd{oLwc}(E,F)$. That is,
$\textmd{o-aLwc}_+(E, F) = \textmd{oLwc}_+(E, F)$.
\end{remark}

It should be noted that a Banach lattice $E$ has order continuous norm if and only if every order interval in $E$ is weakly compact. Hence we have $\textmd{aLwc}(E,F)\subseteq\textmd{oLwc}(E,F)$ if $E$ has order continuous norm. Clearly, the following is an immediate consequence of this observation.
\begin{corollary}
   Let $E$ and  $F$ be Banach lattices such that $E$ has order continuous norm. Then $\textit{DW-}\textmd{Lwc}(E, F) = \textmd{aLwc}(E, F)$.
\end{corollary}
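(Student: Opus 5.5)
The plan is to derive this corollary from Theorem \ref{thm:main_intersection}, which gives $\textit{DW-}\textmd{Lwc}(E,F)=\textmd{aLwc}(E,F)\cap\textmd{oLwc}(E,F)$. Given that identity, the corollary reduces to proving the inclusion $\textmd{aLwc}(E,F)\subseteq\textmd{oLwc}(E,F)$ whenever $E$ has order continuous norm. Once we have it, the intersection collapses to $\textmd{aLwc}(E,F)$ and we are done.

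For the inclusion, let $T\in\textmd{aLwc}(E,F)$ and let $B\subseteq E$ be order bounded, so $B\subseteq[-u,u]$ for some $u\in E^+$. We will invoke the standard characterization that a Banach lattice has order continuous norm if and only if every order interval is weakly compact (e.g. \cite[Theorem 4.9]{Aliprantis2006}). Thus $[-u,u]$ is weakly compact, and in particular $B$ is relatively weakly compact. Since $T$ is aLwc, $T(B)$ is $L$-weakly compact in $F$. Hence $T$ carries order bounded sets onto $L$-weakly compact sets, i.e.\ $T\in\textmd{oLwc}(E,F)$.

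Combining the two steps, $\textmd{aLwc}(E,F)=\textmd{aLwc}(E,F)\cap\textmd{oLwc}(E,F)=\textit{DW-}\textmd{Lwc}(E,F)$. We also have the trivial inclusion $\textit{DW-}\textmd{Lwc}\subseteq\textmd{aLwc}$, since relatively weakly compact sets are disjointly weakly compact. There is no real obstacle here; the only nontrivial input is the classical weak compactness of order intervals under order continuity, which the paper already takes as known in the preceding paragraph.
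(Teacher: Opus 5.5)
Your proposal is correct and follows the paper's own argument. The paper likewise uses the weak compactness of order intervals under order continuity to get $\textmd{aLwc}(E,F)\subseteq\textmd{oLwc}(E,F)$, and then concludes from Theorem \ref{thm:main_intersection}.
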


The following  example shows that an oLwc (resp. aLwc) operator is not necessarily \textit{DW}-Lwc.
\begin{example}
	
\begin{enumerate}
  \item  Let $I:\ell_{p}\to\ell_{p}$\,\, ($1<p<\infty$) be the identity operator on $\ell_p$. Since every order interval in $\ell_p$ is an $L$-weakly compact set, $I$ is oLwc. On the other hand, $B_{\ell_{p}}$ is a disjointly weakly compact set, but it is not an $L$-weakly compact set. Therefore, $I$ is not \textit{DW}-Lwc.
  \item  It should be noted that every bounded linear operator from $L^{\infty}[0,1]$ into $c_0$ is a Dunford-Pettis operator since $L^{\infty}[0,1]$ is a Grothendieck space and has the Dunford-Pettis property. Therefore,  the  operator $T:L^{\infty}[0,1]\to{c_0}$ defined by $$Tf=\left(\int_{0}^{1}f(t)r_{1}(t)\textrm{d}t, \int_{0}^{1}f(t)r_{2}(t)\textrm{d}t, \cdot\cdot\cdot, \int_{0}^{1}f(t)r_{n}(t)\textrm{d}t, \cdot\cdot\cdot\right),$$where  $(r_n)_{n=1}^{\infty}$ is the sequence of Rademacher functions on $[0,1]$, is  Dunford-Pettis, and hence aLwc  since relatively compact sets and $L$-weakly compact sets coincide in $c_0$.  Clearly, $B_{L^{\infty}[0,1]}=[-\mathbf{1},\textbf{1}]$ is a disjointly weakly compact set. However, $TB_{L^{\infty}[0,1]}$ is not relatively compact in $c_0$ (and hence not $L$-weakly compact) since 
      $$\sup_{f\in [-\mathbf{1},\textbf{1}]}\left |\int_{0}^{1}f(t)r_{n}(t)\textrm{d}t \right  |\geq\int_{0}^{1}r_{n}(t) r_{n}(t)\textrm{d}t =\int_{0}^{1}\textrm{d}t=1\nrightarrow0\,\, \, (n\to\infty)$$This implies that $T$ is not \textit{DW}-Lwc.
  \end{enumerate}
  \end{example}
		
\section{The adjoint operators of \textit{DW}-Lwc operators}

Let us recall that $T:X\to E$ is $L$-weakly compact if and only if its adjoint $T^{\prime}$ is $M$-weakly compact. Also, $T:E\to X$ is  $M$-weakly compact  if and only if its adjoint $T^{\prime}$ is $L$-weakly compact. Similarly, $T\in \textmd{aMwc}(E, X)$ if and only if $T^{\prime}\in \textmd{aLwc}(X^{\prime}, E^{\prime})$, and $T\in\textmd{oMwc}(E,F)$ if and only if $T^{\prime}\in \textmd{oLwc}(F^{\prime},E^{\prime})$. Naturally, we are interested in the duality for \textit{DW}-Lwc operators.

\begin{definition}\label{def:DW-Mwc_general} Let $E$ and $F$ be Banach lattices.
\begin{enumerate}
  \item  An operator $T:E\to F$  is called a \textit{DW-}Mwc operator ($T \in \textit{DW-}\textmd{Mwc}(E, F)$) if for every bounded disjoint sequence $(x_n)$ in $E$ and every disjointly weakly compact set $B \subseteq F^{\prime }$, we have $\lim_{n} \sup_{f \in B} |f(Tx_n)| = 0$.
  \item  An operator $T:E\to F^{\prime}$ is called a \textit{DW-}Mwc$^*$ operator  ($T \in \textit{DW-}\textmd{Mwc}^{*}(E, F^{\prime})$) if for every bounded disjoint sequence $(x_n)$ in $E$ and every disjointly weakly compact set $A\subseteq F$, we have $\lim_{n} \sup_{y \in A} |\langle y,Tx_n\rangle| = 0$.
  \end{enumerate}

\end{definition}

It is obvious that $\textmd{Mwc}(E,F^{\prime})\subseteq \textit{DW-}\textmd{Mwc}^{*}(E, F^{\prime})$  for two Banach lattices $E$ and $F$. For the reverse direction we have the following result.

\begin{theorem}\label{thm:3.3}
	Let $F$ be a Banach lattice. $F^{\prime}$ has  order continuous norm if and only if $\textit{DW-}\textmd{Mwc}^*(E, F^{\prime}) = \textmd{Mwc}(E, F^{\prime})$ for every Banach lattice $E$.

\end{theorem}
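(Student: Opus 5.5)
The proof has two directions, and we treat them separately.

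\emph{Sufficiency.} Suppose $F^{\prime}$ has order continuous norm. By the fact already used in the proof of Theorem \ref{DW-Lwc is Lwc}, $B_F$ is then a disjointly weakly compact subset of $F$. Let $T\in \textit{DW-}\textmd{Mwc}^*(E,F^{\prime})$ and let $(x_n)$ be a bounded disjoint sequence in $E$. Taking $A=B_F$ in Definition \ref{def:DW-Mwc_general}(2) gives
$$\|Tx_n\|=\sup_{y\in B_F}|\langle y,Tx_n\rangle|\to 0 .$$
Hence $T$ is $M$-weakly compact. The reverse inclusion $\textmd{Mwc}(E,F^{\prime})\subseteq \textit{DW-}\textmd{Mwc}^*(E,F^{\prime})$ was noted just before the theorem, so the two classes coincide.

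\emph{Necessity.} Assume the two classes coincide for every $E$, and suppose, for a contradiction, that $F^{\prime}$ does not have order continuous norm. Then there is an order bounded disjoint sequence $(f_n)$ in $F^{\prime}$, with $0\le f_n\le g$ say, such that $\|f_n\|\not\to 0$. The plan is to mimic the proof of (3)$\Rightarrow$(1) in Theorem \ref{DW-Lwc is Lwc}, this time with $E=\ell^1$. Define
$$T:\ell^1\to F^{\prime},\qquad T\lambda=\sum_n\lambda_n f_n .$$
This is bounded, since $\|f_n\|\le\|g\|$.

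We check that $T$ is $\textit{DW-}\textmd{Mwc}^*$. A bounded disjoint sequence $(\lambda^{(k)})$ in $\ell^1$ has disjoint supports. For a disjointly weakly compact set $A\subseteq F$,
$$\sup_{y\in A}|\langle y,T\lambda^{(k)}\rangle|\le \|\lambda^{(k)}\|_1\,\sup_{n\in \mathrm{supp}\,\lambda^{(k)}}\ \sup_{y\in A}|f_n(y)| .$$
The order bounded disjoint sequence $(f_n)$ converges uniformly to $0$ on $A$ by \cite[Theorem 2.3]{disjointlyweaklycompact}, the same fact invoked in Theorem \ref{DW-Lwc is Lwc}. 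Because the supports are disjoint, the indices appearing in $\mathrm{supp}\,\lambda^{(k)}$ eventually exceed any fixed $N$, apart from finitely many $k$. So the right-hand side tends to $0$, and $T$ is $\textit{DW-}\textmd{Mwc}^*$.

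By hypothesis $T$ is then $M$-weakly compact. But the unit vectors $(e_n)$ form a bounded disjoint sequence in $\ell^1$ with $\|Te_n\|=\|f_n\|\not\to0$, which is a contradiction.

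The step that needs the most care is the estimate for disjoint blocks. Finitely many $\lambda^{(k)}$ may meet small indices, but only finitely many $k$ do so, so this does not affect the limit. Equivalently, one can pass to a subsequence and argue along it. Apart from this, the argument uses only the characterization of order continuity of $F^{\prime}$ via order bounded disjoint sequences and the uniform convergence result from \cite{disjointlyweaklycompact}.
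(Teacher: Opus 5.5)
Your proof is correct and follows the paper's own argument: you take $A=B_F$ when $F^{\prime}$ has order continuous norm, and for the converse you use the operator $T\lambda=\sum_n\lambda_nf_n$ from $\ell^1$ into $F^{\prime}$, built from an order bounded disjoint sequence, exactly as the paper does. The only cosmetic difference is that you show $\sum_n c_n|\lambda^{(k)}_n|\to 0$ directly from the disjointness of supports, whereas the paper cites $|x_k|\xrightarrow{w^*}0$ in $\ell_1$ and pairs it with $(c_n)\in c_0$.
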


\begin{proof}
	The necessity follows from a classical result that   $F^{\prime}$ has order continuous norm if and only if $B_F$ is disjointly weakly compact (see, e.g., \cite[Theorem 2.4.14]{MeyerNieberg1991}).

     \par For the sufficiency, assume by way of contradiction that $F^\prime $ does not have  order continuous norm. Then there would exist an order bounded disjoint sequence $(f_n)\subseteq(F^{\prime})^+$  such that  $\|f_n\|= 1$ for all $n$. Thus, we define a bounded linear operator $T: \ell_1\to F^\prime$ by $Tx=\sum_{n=1}^\infty \lambda_n f_n$ for every $x=(\lambda_n)\in \ell_1$.

\par First, we have $T \notin \textmd{Mwc}(\ell_1, F^\prime)$. Indeed, let $(e_n)$ be the standard unit vector basis of $\ell_1$. It is a disjoint sequence from $B_{\ell_1}$. However, $\|Te_{n}\| = \|f_n\| = 1 \nrightarrow0$.

 \par Now, we claim that $T\in\textit{DW-}\textmd{Mwc}^*(\ell_1, F^\prime)$. To this end, let $(x_k)_{k=1}^{\infty}$ be a disjoint sequence from $B_{\ell_1}$, where $x_{k}=(\lambda_{1}^{(k)}, \lambda_{2}^{(k)}, \cdot\cdot\cdot,\lambda_{n}^{(k)},\cdot\cdot\cdot )\in \ell_1$. Therefore, from Corollary 2.4.3 of \cite{MeyerNieberg1991} it follows that $|x_{k}|\xrightarrow{w^*}0$ in $\ell_1$.  Let $A\subseteq F$ be an arbitrary disjointly weakly compact set, and let  $c_{n} := \sup_{y\in A} |f_{n}(y)|$ \,\, $(n=1, 2, \cdot\cdot\cdot)$. In view of \cite[Theorem 2.3]{disjointlyweaklycompact}, we have $c_{n}\to 0$ \,($n\to\infty$), i.e., $\alpha=(c_n)\in c_0$.  For every $y\in A$, we have
 
 $$|\langle y, Tx_k\rangle|=\left|\sum_{n=1}^{\infty} \lambda_{n}^{(k)} f_{n}(y)\right|\leq\sum_{n=1}^{\infty}|\lambda_{n}^{(k)}||f_{n}(y)|\leq\sum_{n=1}^{\infty}c_{n}|\lambda_{n}^{(k)}|$$Taking the supremum over $A$, we obtain
 
 $$\sup_{y\in A}|\langle y, Tx_k\rangle|\leq\sum_{n=1}^{\infty}c_{n}|\lambda_{n}^{(k)}|=\langle \alpha, |x_k|\rangle\to0\,\,\, (k\to\infty)$$since $\alpha\in c_0$ and $|x_{k}|\xrightarrow{w^*}0$ in $\ell_1$. By definition, $T\in\textit{DW-}\textmd{Mwc}^*(\ell_1, F^\prime)$.
 \end{proof}

It should be noted that, by Lemma \ref{old lemma}, disjointly weakly compact subsets of $F$ are likewise disjointly weakly compact in $F^{\prime\prime}$. Therefore, the following set inclusions hold:

	$$\textmd{Mwc}(E, F^\prime ) \subseteq \textit{DW-}\textmd{Mwc}(E, F^\prime ) \subseteq \textit{DW-}\textmd{Mwc}^*(E, F^\prime).$$As a consequence of Theorem \ref{thm:3.3}, we have $\textit{DW-}\textmd{Mwc}(E, F^\prime )=\textit{DW-}\textmd{Mwc}^*(E, F^\prime)$ when $F^\prime$ has order continuous norm.

\par As for $\textit{DW-}\textmd{Mwc}(E, F)$, we know that $\textmd{Mwc}(E, F)=\textit{DW-}\textmd{Mwc}(E, F)$ when $F^{\prime\prime}$ has order continuous norm.

\begin{example}
Let $E=\ell_p$ \,\,$(1\leq p<\infty)$ and $F=c_0$. Clearly, $F^{\prime\prime}=\ell_\infty$ does not have order continuous norm. Consider the natural inclusion operator $J_{\ell_{p}, c_0}:\ell_p \to c_0$. Let $(e_n)_{n=1}^{\infty}$ be the standard unit vector basis of $\ell_p$. Then  $(e_n)_{n=1}^{\infty}$ is a disjoint sequence from $B_{\ell_p}$, and $\|J_{\ell_{p}, c_0}(e_n)\|=1\nrightarrow0$. Hence, $J_{\ell_{p}, c_0}\notin\textmd{Mwc}(\ell_{p}, c_0)$. On the other hand, we claim that $J_{\ell_{p}, c_0}\in\textit{DW-}\textmd{Mwc}(\ell_{p}, c_0)$. To this end, let $(x_n)_{n=1}^{\infty}\subset B_{\ell_p}$ be a disjoint sequence and let $B\subset \ell_{1}=(c_0)^{\prime}$ be a disjointly weakly compact set. Then $(J_{\ell_{p}, c_0}(x_n))$ is likewise a disjoint sequence in $c_0$, and $B$ is an $L$-weakly compact subset of $\ell_1$ since $\ell_1$ has the Schur property \cite[Corollary 2.7]{disjointlyweaklycompact}. Therefore, $(J_{\ell_{p}, c_0}(x_n))_{n=1}^{\infty}$ converges uniformly to zero on $B$.
\end{example}
	
There are two questions which the authors are not able to answer and have to leave  still open.
\begin{question}
(1)  Is the norm of $F^{\prime\prime}$ order continuous norm if $\textit{DW-}\textmd{Mwc}(E, F)=\textmd{Mwc}(E, F)$ for every Banach lattice $E$?

(2) Does $F^{\prime}$ have order continuous norm if $\textit{DW-}\textmd{Mwc}(E, F^\prime )=\textit{DW-}\textmd{Mwc}^*(E, F^\prime)$ for every Banach lattice $E$?

\end{question}

Next, we turn our attention to the adjoint operators of \textit{DW}-Lwc and \textit{DW}-Mwc operators.
\begin{proposition}\label{adjoint}
	For a bounded linear operator $T: E \to F$ between Banach lattices  the following assertions hold:
	\begin{enumerate}
        \item $T\in\textit{DW-}\textmd{Lwc}(E, F)\iff T^{\prime}\in\textit{DW-}\textmd{Mwc}^*(F^{\prime }, E^\prime)$.
		\item $T\in\textit{DW-}\textmd{Mwc}(E, F)\iff T^{\prime}\in\textit{DW-}\textmd{Lwc}(F^{\prime}, E^{\prime})$.
		\item $T^{\prime}\in\textit{DW-}\textmd{Mwc}(F^{\prime}, E^{\prime}) \implies T\in\textit{DW-}\textmd{Lwc}(E, F)$.
        \item $\textit{DW-}\textmd{Mwc}(E, F) = oMwc(E, F)\cap\textmd{aMwc}(E, F)$.
        \item $\textit{DW-}\textmd{Mwc}_{+}(E, F)=\textmd{aMwc}_{+}(E, F)$.
	\end{enumerate}
\end{proposition}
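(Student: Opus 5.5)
Items (1)–(3) are pure unwinding of definitions. I will use the standard description of $L$-weakly compactness by duality: a bounded set $A\subseteq F$ is $L$-weakly compact if and only if $\sup_{y\in A}|g_n(y)|\to 0$ for every bounded disjoint sequence $(g_n)$ in $F^{\prime}$. This was already used in the proofs of Theorems \ref{characterization of DW-Lwc operators} and \ref{thm:main_intersection}.

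For (1), note that $\langle x, T^{\prime}g_n\rangle=g_n(Tx)$. Then $T\in\textit{DW-}\textmd{Lwc}(E,F)$ says exactly that for every disjointly weakly compact $A\subseteq E$ and every bounded disjoint $(g_n)\subseteq F^{\prime}$ we have $\sup_{x\in A}|\langle x,T^{\prime}g_n\rangle|\to0$. This is literally the definition of $T^{\prime}\in\textit{DW-}\textmd{Mwc}^*(F^{\prime},E^{\prime})$.

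For (2), I need the dual description of $L$-weakly compactness in a dual lattice: a bounded set $B\subseteq E^{\prime}$ is $L$-weakly compact if and only if $\sup_{f\in B}|f(x_n)|\to0$ for every bounded disjoint $(x_n)\subseteq E$ \cite[Proposition 3.6.3]{MeyerNieberg1991}. With this, $T^{\prime}\in\textit{DW-}\textmd{Lwc}(F^{\prime},E^{\prime})$ means that for each disjointly weakly compact $B\subseteq F^{\prime}$ and bounded disjoint $(x_n)\subseteq E$ we have $\sup_{f\in B}|f(Tx_n)|\to0$. That is the definition of $T\in\textit{DW-}\textmd{Mwc}(E,F)$.

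For (3), apply (2) to $T^{\prime}$. This gives $T^{\prime\prime}\in\textit{DW-}\textmd{Lwc}(E^{\prime\prime},F^{\prime\prime})$. Now let $A\subseteq E$ be disjointly weakly compact. By Lemma \ref{old lemma} applied to the canonical embedding $E\to E^{\prime\prime}$ (a lattice isometry), $A$ is disjointly weakly compact in $E^{\prime\prime}$. Hence $T^{\prime\prime}(A)=T(A)$ is $L$-weakly compact in $F^{\prime\prime}$. Since $F$ is a closed sublattice of $F^{\prime\prime}$, the solid hull of $T(A)$ in $F$ sits inside its solid hull in $F^{\prime\prime}$, and norms agree. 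So $T(A)$ is $L$-weakly compact in $F$.

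For (4), I will combine (2) with Theorem \ref{thm:main_intersection} and the duality results quoted in Section 1. By (2), $T\in\textit{DW-}\textmd{Mwc}(E,F)$ if and only if $T^{\prime}\in\textit{DW-}\textmd{Lwc}(F^{\prime},E^{\prime})$. By Theorem \ref{thm:main_intersection} this is equivalent to $T^{\prime}\in\textmd{aLwc}(F^{\prime},E^{\prime})\cap\textmd{oLwc}(F^{\prime},E^{\prime})$. Finally, $T^{\prime}\in\textmd{aLwc}$ if and only if $T\in\textmd{aMwc}$, and $T^{\prime}\in\textmd{oLwc}$ if and only if $T\in\textmd{oMwc}$.

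For (5), positivity of $T$ passes to $T^{\prime}$. So by (2) and Corollary \ref{positive DW-Lwc}, $T\in\textit{DW-}\textmd{Mwc}_+$ if and only if $T^{\prime}\in\textmd{aLwc}_+(F^{\prime},E^{\prime})$, and this holds if and only if $T\in\textmd{aMwc}_+(E,F)$.

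The main obstacle is the dual-lattice characterization used in (2). I will rely on the classical fact that in a dual Banach lattice $L$-weakly compactness can be tested against disjoint sequences of the predual, as cited. I also need to check carefully that $L$-weakly compactness descends from $F^{\prime\prime}$ to $F$ in (3). This follows because $F$ is an ideal-free but regular sublattice, whose solid hull in $F$ is contained in the solid hull taken in $F^{\prime\prime}$.
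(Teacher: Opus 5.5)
Your proposal is correct. For (1), (2), (4) and (5) it is the paper's argument. Writing (4) and (5) as chains of equivalences gives both inclusions at once, where the paper checks one inclusion directly.

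For (3) you take a slightly different path:
\begin{itemize}
\item You apply (2) to $T^{\prime}$ to get $T^{\prime\prime}\in\textit{DW-}\textmd{Lwc}(E^{\prime\prime},F^{\prime\prime})$.
\item You move a disjointly weakly compact set $A\subseteq E$ into $E^{\prime\prime}$ by Lemma~\ref{old lemma}.
\item You then descend from $F^{\prime\prime}$ to $F$.
\end{itemize}
The paper instead uses the inclusion $\textit{DW-}\textmd{Mwc}(F^{\prime},E^{\prime})\subseteq\textit{DW-}\textmd{Mwc}^*(F^{\prime},E^{\prime})$, which rests on the same application of Lemma~\ref{old lemma} to $E\to E^{\prime\prime}$, and then invokes (1). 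The two are the same idea packaged differently. The paper's version is shorter and needs no descent step. Yours yields the stronger intermediate fact that $T^{\prime\prime}$ is itself \textit{DW}-Lwc.

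Your descent step is valid, but justify it by what it actually uses. The canonical embedding $J_F:F\to F^{\prime\prime}$ is an isometric lattice homomorphism. So a disjoint sequence $(y_n)$ in $Sol_F(T(A))$ is sent to a disjoint sequence $(J_Fy_n)$ in $Sol_{F^{\prime\prime}}(J_FT(A))$ with $\|J_Fy_n\|=\|y_n\|$, and L-weak compactness of $J_FT(A)$ in $F^{\prime\prime}$ gives $\|y_n\|\to0$.

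Drop the phrase ``ideal-free but regular sublattice''. ``Ideal-free'' has no meaning here, and $F$ need not be a regular sublattice of $F^{\prime\prime}$. For $F=c$, the sequence $(0,\dots,0,1,1,\dots)$ decreases to $0$ in $c$ but not in $c^{\prime\prime}$, since the limit functional equals $1$ on every term. Regularity plays no role in your argument anyway.
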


\begin{proof} The proofs of these implications are based on the equivalent characterizations of $L$-weakly compact sets in a Banach lattice and its dual space \cite[Proposition 3.6.2 \& 3.6.3]{MeyerNieberg1991}.

\begin{equation*}
    	\begin{aligned}(1)
 \quad T \in &\textit{DW-}\textmd{Lwc}(E, F)\\
            &\iff T(A)\text{ is an $L$-weakly compact set in $F$ for every disjointly weakly } \\
            & \qquad\qquad\textmd{ compact set $A\subset E$}\\
            &\iff\sup_{x\in A}|f_{n}(Tx)|\to 0\text{ for every disjointly weakly compact set $A\subseteq E$}\\
            &\qquad \qquad \text{  and every bounded disjoint sequence } (f_{n})\subseteq F^{\prime}\\
		    &\iff\sup_{x\in A}|(T^{\prime}f_{n})x|\to 0 \text{ for every bounded disjoint sequence $(f_{n})\subseteq F^\prime$} \\
            & \qquad\qquad \text{ and  every disjointly weakly compact set $A\subseteq E$} \\
		    &\iff T^{\prime} \in \textit{DW-}\textmd{Mwc}^*(F^{\prime }, E^\prime).
	    \end{aligned}   
	\end{equation*}
\begin{equation*}
    \begin{aligned}(2)
\quad  T \in &\textit{DW-}\textmd{Mwc}(E, F)\\
            &\iff\sup_{f\in B}|f(Tx_n)|\to 0 \text{ for every bounded disjoint sequence  $(x_{n})\subseteq E$ and}\\
            &\qquad\qquad  \text{  every disjointly weakly compact set $B\subseteq F^{\prime}$} \\
            &\iff\sup_{f\in B}|(T^{\prime}f)x_n|\to 0 \text{ for every disjointly weakly compact set $B\subseteq F^\prime$ } \\
            & \qquad \qquad \text{ and every bounded disjoint sequence $(x_{n})\subseteq E$} \\
            &\iff T^{\prime}(B)\text{ is an $L$-weakly compact set in $E^\prime$ for every disjointly weakly } \\
            & \qquad\qquad\textmd{ compact set  $B\subseteq F^\prime$}\\
		    &\iff T^{\prime} \in \textit{DW-}\textmd{Lwc}(F^{\prime }, E^{\prime}).
	    \end{aligned}   
	\end{equation*} 
\begin{equation*}
    \begin{aligned}(3)
\quad T^{\prime}\in\textit{ DW-}\textmd{Mwc}(F^{\prime}, E^{\prime})\implies T^{\prime}\in\textit{DW-}\textmd{Mwc}^*(F^{\prime}, E^{\prime})\implies T\in\textit{DW-}\textmd{Lwc}(E, F).
\end{aligned}   
\end{equation*} 
\par (4) The proof of  $\textit{DW-}\textmd{Mwc}(E, F)\subset\textmd{oMwc}(E, F) \cap \textmd{aMwc}(E, F)$ is straightforward since  relatively weakly compact sets and order bounded sets in $F^\prime$ are all disjointly weakly compact. For the reverse inclusion, let $T\in\textmd{oMwc}(E, F) \cap \textmd{aMwc}(E, F)$. Then $T^{\prime}\in \textmd{oLwc}(F^{\prime}, E^{\prime}) \cap \textmd{aLwc}(F^{\prime}, E^{\prime})$. Therefore, from Theorem \ref{thm:main_intersection} it follows that $T^{\prime}\in \textit{DW-}\textmd{Lwc}(F^{\prime}, E^{\prime})$ and hence, by statement (2), $T\in\textit{DW-}\textmd{Mwc}(E, F)$.

\par(5) If $T\in\textmd{aMwc}_{+}(E, F)$, then $T^{\prime}\in\textmd{aLwc}_{+}(F', E^\prime)$. From Corollary \ref{positive DW-Lwc} it follows that $T^{\prime}\in \textit{DW-}\textmd{Lwc}_{+}(F^{\prime}, E^\prime)$. By statement (2), we have $T\in\textit{DW-}\textmd{Mwc}_{+}(E, F)$.
\end{proof}

\begin{remark}\label{I is DWMwc}For a Banach lattice $E$, let $I_E$ (resp. $I_{E^\prime}$) denote the identity operator on $E$ (resp. $E^\prime$).
\begin{enumerate}
  \item  By Proposition \ref{adjoint}(2) we have $I_{E}\in \textit{DW-}\textmd{Mwc}(E) \iff I_{E^\prime} \in \textit{DW-}\textmd{Lwc}( E^{\prime})$ $\iff$ $E^{\prime}$  has the positive Schur property.
  \item  $I_{E^\prime}\in\textit{DW-}\textmd{Mwc}^*(E^\prime) \iff I_{E}\in\textit{DW-}\textmd{Lwc}(E)$ $\iff$ $E$  has the positive Schur property.
\end{enumerate}

\end{remark}

Now we are in a position to consider  the algebraic and lattice-order structures of the class of $\textit{DW-}\textmd{Lwc}$ (resp. $\textit{DW-}\textmd{Mwc}$, $\textit{DW-}\textmd{Mwc}^*$) operators.

For Banach lattices $E$ and $F$,  it is routine to verify that  $\textit{DW-}\textmd{Lwc}(E, F)$  (resp. $\textit{DW-}\textmd{Mwc}(E, F)$, $\textit{DW-}\textmd{Mwc}^*(E, F^\prime)$), equipped with the operator norm, is a Banach space.

\begin{proposition}\label{ideal}
For Banach lattices $E$, $F$ and $G$ the following assertions hold:
\begin{enumerate}
  \item   If $T\in\textit{DW-}Lwc(E, F)$ and $S\in \mathcal{L}(G,E)$, then $TS\in\textit{DW-}\textmd{Lwc}(G, F)$. 
  \item  If $T\in\textit{DW-}\textmd{Lwc}(E, F)$ and $Q\in \textmd{aLwc}(F,G)$ (resp. $Q\in \textmd{oLwc}(F,G)$), then $QT\in\textit{DW-}\textmd{Lwc}(E, G)$. 
  \item  $\textit{DW-}\textmd{Lwc}(E)$ is a closed right ideal of $ \mathcal{L}(E)$.
  \item  If $T\in\textit{DW-}\textmd{Mwc}(E, F)$ and $S\in \mathcal{L}(F,G)$, then $ST\in\textit{DW-}\textmd{Mwc}(E, G)$.
  \item  $\textit{DW-}\textmd{Mwc}(E)$ is a closed left ideal of $ \mathcal{L}(E)$.
\end{enumerate}
\end{proposition}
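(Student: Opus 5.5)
The five assertions follow more or less directly from Lemma \ref{old lemma}, Theorem \ref{thm:main_intersection} and Proposition \ref{adjoint}(2); I would take them in order.

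For (1), take a disjointly weakly compact set $A\subseteq G$. By Lemma \ref{old lemma}, $S(A)$ is disjointly weakly compact in $E$. Since $T$ is \textit{DW}-Lwc, $TS(A)=T(S(A))$ is $L$-weakly compact in $F$, so $TS$ is \textit{DW}-Lwc.

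For (2), I would use Theorem \ref{thm:main_intersection}, which identifies $\textit{DW-}\textmd{Lwc}$ with $\textmd{aLwc}\cap\textmd{oLwc}$. It then suffices to show that $QT$ is both aLwc and oLwc.
\begin{itemize}
\item Case $Q\in\textmd{aLwc}(F,G)$. For a disjointly weakly compact $A\subseteq E$, the set $T(A)$ is $L$-weakly compact in $F$, hence relatively weakly compact. Therefore $Q(T(A))$ is $L$-weakly compact in $G$. This shows $QT$ is \textit{DW}-Lwc directly, without invoking the theorem.
\item Case $Q\in\textmd{oLwc}(F,G)$. This is the real point. An $L$-weakly compact set is in general neither order bounded nor guaranteed to be mapped onto an $L$-weakly compact set by an oLwc operator. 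I would instead use that $L$-weakly compact sets are approximately order bounded \cite[Proposition 3.6.2]{MeyerNieberg1991}. Given $\varepsilon>0$, choose $u\in F^+$ with $T(A)\subseteq[-u,u]+\varepsilon B_F$. Then
\[
QT(A)\subseteq Q[-u,u]+\varepsilon\|Q\|B_G,
\]
where $Q[-u,u]$ is $L$-weakly compact because $Q$ is oLwc.
\end{itemize}
The second case then reduces to the standard fact that if, for every $\varepsilon>0$, a bounded set $B$ satisfies $B\subseteq L_\varepsilon+\varepsilon B_G$ with $L_\varepsilon$ $L$-weakly compact, then $B$ is $L$-weakly compact. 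One checks this via the dual characterization: $\sup_{y\in B}|g_n(y)|\to0$ for every bounded disjoint sequence $(g_n)$ in $G'$ \cite[Proposition 3.6.3]{MeyerNieberg1991}. The inclusion gives
\[
\sup_{y\in B}|g_n(y)|\le\sup_{y\in L_\varepsilon}|g_n(y)|+\varepsilon\sup_k\|g_k\|,
\]
and the first term tends to $0$ as $n\to\infty$. This approximation step is the main obstacle; everything else is formal.

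For (3), closedness under addition and scalar multiplication is clear from the dual characterization. Closedness in the operator norm comes from the same $\varepsilon$-approximation argument: if $T_k\to T$, then $T(A)\subseteq T_k(A)+\|T-T_k\|\sup_{x\in A}\|x\|\,B_F$. Being a right ideal is exactly (1) with $G=E$.

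For (4), I would pass to adjoints and use Proposition \ref{adjoint}(2). Write $T'\in\textit{DW-}\textmd{Lwc}(F',E')$ and $(ST)'=T'S'$ with $S'\in\mathcal{L}(G',F')$. By (1), $T'S'$ is \textit{DW}-Lwc, and applying Proposition \ref{adjoint}(2) again gives $ST\in\textit{DW-}\textmd{Mwc}(E,G)$.

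For (5), the closed subspace part follows as in (3), either via the adjoint isometry or directly from the uniform-limit estimate. The left ideal property is (4) with $G=E$.
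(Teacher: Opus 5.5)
Your proposal is correct and follows essentially the paper's route: (1) from Lemma~\ref{old lemma}, the oLwc case of (2) by approximating $T(A)$ with order intervals and absorbing the error term, (4) by passing to adjoints via Proposition~\ref{adjoint}(2), with (3) and (5) then immediate. (The paper closes the approximation step in (2) with the $[-u,u]+\varepsilon B_G$, $u\in (G^a)^+$, characterization of $L$-weakly compact sets rather than the dual one you use, which is an immaterial difference.) One small correction to your commentary: your own $\varepsilon$-argument shows that an oLwc operator \emph{does} carry every $L$-weakly compact (indeed every approximately order bounded) set onto an $L$-weakly compact set, so the aside claiming this is not guaranteed is inaccurate, although it plays no role in the proof.
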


\begin{proof}
It suffices to prove statement (1), (2) and (4).
\par (1) Let $T\in\textit{DW-}\textmd{Lwc}(E, F)$ and $S\in \mathcal{L}(G,E)$. Let $A$ be a disjointly weakly compact subset of $G$. By Lemma \ref{old lemma}, $S(A)$ is likewise disjointly weakly compact in $E$. Therefore, $TS(A)$ is an $L$-weakly compact subset of $F$. This implies that $TS\in\textit{DW-}\textmd{Lwc}(G, F)$.
\par (2) Let $T\in\textit{DW-}\textmd{Lwc}(E, F)$ and $Q\in\textmd{oLwc}(F,G)$. Let $A\subseteq E$ be a disjointly weakly compact set. For each $\epsilon>0$, there exists $0\leq y\in F^a$ such that $T(A)\subseteq [-y,y]+\epsilon B_F$. Since $Q[-y,y]$ is $L$-weakly compact, there exists $0\leq u\in G^a$ satisfying 
$$QT(A)\subseteq Q[-y,y]+\epsilon Q(B_F)\subseteq [-u,u]+\epsilon B_{G}+\epsilon \|Q\|B_G$$which implies that $QT(A)$ is $L$-weakly compact, and hence $QT$ is $DW$-Lwc.
\par (4) If $T\in\textit{DW-}\textmd{Mwc}(E, F)$ and $S\in \mathcal{L}(F,G)$, then by assertion (1) and Proposition \ref{adjoint}(2) it follows that $T^{\prime}S^{\prime}\in\textit{DW-}\textmd{Lwc}$. This implies that $ST\in\textit{DW-}\textmd{Mwc}(E, G)$.
\end{proof}

 As usual, $\mathcal{L}^{r}(E,F)$ denotes the space of regular operators between Banach lattices. $\textit{DW-}\textmd{Lwc}_{+}(E, F)$ (resp. $\textit{DW-}\textmd{Mwc}_{+}(E, F)$, $\textit{DW-}\textmd{Mwc}^{*}_{+}(E, F^\prime)$) denotes the collection of positive \textit{DW-}Lwc (resp. $\textit{DW-}\textmd{Mwc}$, $\textit{DW-}\textmd{Mwc}^{*}$) operators. $\mathrm{Span}(\textit{DW-}\textmd{Lwc}_+(E, F))$ is the linear subspace generated by $\textit{DW-}\textmd{Lwc}_+(E, F))$ in $\mathcal{L}(E,F)$, actually in $\mathcal{L}^{r}(E,F)$. For a regular operator $T$ between Banach lattices, let $\|T\|_r$ denote the regular norm of $T$. It is well-known that $(\mathcal{L}^{r}(E,F), \|\cdot\|_r)$ is a Banach space, and furthermore $(\mathcal{L}^{r}(E,F), \|\cdot\|_r)$  is a Dedekind complete Banach lattice if the target space $F$, in addition, is Dedekind complete \cite[Proposition 1.3.6]{MeyerNieberg1991}.

\begin{proposition}\label{dw-lwc domination}
Let $S, T\in \mathcal{L}(E, F)$  satisfy  $\pm S \leq T$. If $T$ is $ \textit{DW--}\textmd{Lwc}$ \rm(resp.  $\textit{DW-}\textmd{Mwc}$\rm), then $S$ is likewise $\textit{DW-}\textmd{Lwc}$ (resp.  $\textit{DW-}\textmd{Mwc}$). Furthermore, we have
 $S\in\mathrm{Span}(\textit{DW-}\textmd{Lwc}_{+}(E, F))$ (resp. $\mathrm{Span}(\textit{DW-}\textmd{Mwc}_{+}(E, F))$).
\end{proposition}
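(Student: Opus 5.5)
Assume $\pm S\le T$; then $T\ge 0$, and both $T+S$ and $T-S$ lie between $0$ and $2T$. I will show that $S$ is \textit{DW}-Lwc by checking directly the defining property of $L$-weakly compact sets. The second claim then follows quickly. The \textit{DW}-Mwc case will be reduced to the Lwc case by duality via Proposition \ref{adjoint}(2).

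\textbf{Domination.} Let $A\subseteq E$ be disjointly weakly compact. As in the proof of Theorem \ref{thm:main_intersection}, I may assume $A$ is solid: $Sol(A)$ is again disjointly weakly compact, so $T(Sol(A))$ is $L$-weakly compact. Take a bounded disjoint sequence $(g_n)$ in $F'$. By \cite[Proposition 3.6.3]{MeyerNieberg1991} I must show $\sup_{x\in A}|g_n(Sx)|\to0$. Since $A$ is solid, it suffices to consider $x\in A^+$. For such $x$, the hypothesis $-T\le S\le T$ gives
$$|g_n(Sx)|\le |g_n|(|Sx|)\le |g_n|(Tx).$$
The sequence $(|g_n|)$ is again bounded and disjoint in $F'$. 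Because $T$ is \textit{DW}-Lwc, $\sup_{x\in A}|g_n|(Tx)\to0$, and therefore $\sup_{x\in A}|g_n(Sx)|\to0$. Thus $S$ is \textit{DW}-Lwc.

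\textbf{Span statement.} First, $2T$ is \textit{DW}-Lwc and $T+S$ is positive and dominated by $2T$. By the domination result just proved, $T+S$ is \textit{DW}-Lwc, and clearly $T$ is as well. Hence
$$S=(T+S)-T\in \mathrm{Span}(\textit{DW-}\textmd{Lwc}_{+}(E,F)).$$

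\textbf{The \textit{DW}-Mwc case.} From $\pm S\le T$ we get $\pm S'\le T'$ in $\mathcal{L}(F',E')$, since adjoints of positive operators are positive. By Proposition \ref{adjoint}(2), $T'$ is \textit{DW}-Lwc. The first part then shows $S'$ is \textit{DW}-Lwc, and Proposition \ref{adjoint}(2) again gives that $S$ is \textit{DW}-Mwc. Finally, $T+S\ge0$ and $\pm(T+S)\le 2T$, so $T+S$ is a positive \textit{DW}-Mwc operator. Writing $S=(T+S)-T$ gives the span statement.

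\textbf{Main obstacle.} The only delicate step is the reduction to positive $x$ in a solid set. This needs two facts: that replacing $A$ by $Sol(A)$ is harmless for both hypothesis and conclusion, and that each $x\in A$ splits as $x=x^+-x^-$ with $x^\pm\in A$. Both are immediate from the definition via solid hulls. The dual reduction also needs $T'$ positive, which holds because $T\ge0$.
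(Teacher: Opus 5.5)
Your proposal is correct and follows essentially the paper's argument: for a solid disjointly weakly compact set you use $|Sx|\le T|x|$, and you handle the \textit{DW}-Mwc case by duality through Proposition~\ref{adjoint}(2). The only cosmetic differences are that the paper concludes directly from $S(A)\subseteq Sol(T(A))$ and the solid-hull definition of $L$-weak compactness, rather than testing against bounded disjoint sequences in $F'$, and that it writes $S=\frac12(T+S)-\frac12(T-S)$ instead of $S=(T+S)-T$.
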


\begin{proof}

Assume first that $T$ is $ \textit{DW-}\textmd{Lwc}$.  Let $A$ be an arbitrary disjointly weakly compact subset of $E$. By definition, we may assume without loss of generality that $A$ is solid. For all $x \in A$, we have $|Sx|\leq T(|x|) \in T(A)$, which implies that $S(A)\subseteq Sol(T(A))$. Since $T \in \textit{DW-}\textmd{Lwc}(E, F)$, $Sol(T(A))$ is an $L$-weakly compact set. Therefore, $S(A)$ is $L$-weakly compact, and hence $S$ is a \textit{DW-}Lwc operator. Now, $S=\frac{1}{2}(T+S)-\frac{1}{2}(T-S)$ with $T+S, T-S\in \textit{DW-}\textmd{Lwc}_{+}(E, F)$. Now, it is clear that $S\in\mathrm{Span}(\textit{DW-}\textmd{Lwc}_{+}(E, F))$.

\par On the other hand, if $T$ is a $\textit{DW-}\textmd{Mwc}$ operator, then $\pm S^{\prime}\leq T^{\prime}\in \textit{DW-}\textmd{Lwc}(F^{\prime},E^{\prime})$. Therefore,  $S^{\prime}\in \textit{DW-}\textmd{Lwc}(F^{\prime},E^{\prime})$, which implies that  $S\in\textit{DW-}\textmd{Mwc}(E,F)$. Similarly, $S\in\mathrm{Span}(\textit{DW-}\textmd{Mwc}_{+}(E, F))$.
\end{proof}

\begin{remark}
By Corollary \ref{positive DW-Lwc} and Proposition \ref{adjoint}(5), we know that
 $$\mathrm{Span}(\textit{DW-}\textmd{Lwc}_{+}(E, F))=\mathrm{Span}(\textmd{aLwc}_{+}(E, F)),$$  
 $$\mathrm{Span}(\textit{DW-}\textmd{Mwc}_{+}(E, F))=\mathrm{Span}(\textmd{aMwc}_{+}(E, F)).$$By \cite[Theorem 6 \& 7]{Akay B},   $\mathrm{Span}(\textit{DW-}\textmd{Lwc}_+(E,F))$ (resp. $\mathrm{Span}(\textit{DW-}\textmd{Mwc}_+(E, F))$ if $F$ is Dedekind complete),  equipped with the regular norm, is a  Dedekind complete Banach lattice.
\end{remark}

As for the lattice properties of \textit{DW}-Mwc$^*$ operators, we have the following result.
\begin{theorem}\label{Domination Property}
	Let $E$ and $F$ be two Banach lattices. 
	\begin{enumerate}
       \item Let $T, S \in \mathcal{L}(E, F^{\prime})$ satisfy $\pm S \leq T$. If $T \in \textit{DW-}\textmd{Mwc}^*(E, F^{\prime})$, then $S \in \mathrm{Span}(\textit{DW-}\textmd{Mwc}^*_+(E, F^{\prime}))$.

        \item $\mathrm{Span}(\textit{DW-}\textmd{Mwc}^*_+(E, F^{\prime}))$ is a $\|\cdot\|_r$-closed order ideal in $(\mathcal{L}^r(E, F^{\prime }), \|\cdot\|_r)$. Therefore, $\mathrm{Span}(\textit{DW-}\textmd{Mwc}^*_+(E, F^{\prime}))$, equipped with the regular norm, is a Dedekind complete Banach lattice in its own right.

	\end{enumerate}
\end{theorem}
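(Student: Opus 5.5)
Part (1) follows the pattern of Proposition \ref{dw-lwc domination}. Suppose $\pm S\le T$ with $T\in\textit{DW-}\textmd{Mwc}^*(E,F^{\prime})$; then $T\ge 0$. The plan is to show that the positive operators $T+S$ and $T-S$ are again \textit{DW}-Mwc$^*$, which gives $S=\frac12(T+S)-\frac12(T-S)\in\mathrm{Span}(\textit{DW-}\textmd{Mwc}^*_+(E,F^{\prime}))$. Since $0\le T\pm S\le 2T$, it suffices to prove a domination statement: if $0\le R\le T$ and $T$ is \textit{DW}-Mwc$^*$, then so is $R$.

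To prove this, take a bounded disjoint sequence $(x_n)$ in $E$ and a disjointly weakly compact set $A\subseteq F$. We may assume $A$ is solid, since $Sol(A)$ is again disjointly weakly compact by definition. For $y\in A$ we have $|y|\in A$, and
$$|\langle y,Rx_n\rangle|\le\langle |y|,R|x_n|\rangle\le\langle |y|,T|x_n|\rangle .$$
Because $(|x_n|)$ is also a bounded disjoint sequence, the hypothesis on $T$ gives $\sup_{y\in A}|\langle y,T|x_n|\rangle|\to0$, and hence $\sup_{y\in A}|\langle y,Rx_n\rangle|\to 0$. This argument also shows that $S$ itself is \textit{DW}-Mwc$^*$, by the same estimate with $|Sx|\le T|x|$.

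For part (2), let $\mathcal I=\mathrm{Span}(\textit{DW-}\textmd{Mwc}^*_+(E,F^{\prime}))$. Since $F^{\prime}$ is Dedekind complete, $\mathcal L^r(E,F^{\prime})$ is a Dedekind complete Banach lattice. First we show $\mathcal I$ is a vector sublattice and an order ideal. Every $S\in\mathcal I$ has the form $T_1-T_2$ with $T_i\ge0$ \textit{DW}-Mwc$^*$, so $|S|\le T_1+T_2=:T$. The set of \textit{DW}-Mwc$^*$ operators is a vector space, so $T$ is a positive \textit{DW}-Mwc$^*$ operator. Now if $|R|\le|S|\le T$, then $\pm R\le T$, and part (1) gives $R\in\mathcal I$. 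In particular $|S|\in\mathcal I$, so $\mathcal I$ is a solid subspace, i.e.\ an order ideal.

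It remains to show that $\mathcal I$ is $\|\cdot\|_r$-closed; this is the step I expect to need the most care. Let $S_k\in\mathcal I$ with $\|S_k-S\|_r\to0$. Passing to a subsequence, assume $\|S_{k+1}-S_k\|_r<2^{-k}$. Set
$$T:=|S_1|+\sum_k |S_{k+1}-S_k| ,$$
which converges in $\|\cdot\|_r$, and satisfies $|S|\le T$. Each partial sum is a positive element of $\mathcal I$, hence a positive \textit{DW}-Mwc$^*$ operator. Since $\|\cdot\|\le\|\cdot\|_r$, $T$ is an operator-norm limit of \textit{DW}-Mwc$^*$ operators. The class $\textit{DW-}\textmd{Mwc}^*(E,F^{\prime})$ is operator-norm closed, by a standard $\varepsilon/3$ argument using that $(x_n)$ is bounded and $A$ is bounded. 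So $T\in\textit{DW-}\textmd{Mwc}^*_+$. Then $\pm S\le T$, and part (1) yields $S\in\mathcal I$.

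Finally, a closed order ideal of a Dedekind complete Banach lattice is itself a Banach lattice under the restricted norm, and it is Dedekind complete: any order-bounded set in $\mathcal I$ has its supremum in $\mathcal L^r$, and that supremum is dominated in modulus by an element of $\mathcal I$, so it lies in $\mathcal I$ by solidity.
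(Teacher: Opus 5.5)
Your proposal is correct. Part (1) and the order-ideal half of (2) match the paper's proof; the paper applies the estimate $|\langle y,Sx_n\rangle|\le\langle |y|,T|x_n|\rangle$ directly to $S$ and then uses linearity, which you also note.

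The one genuinely different step is the $\|\cdot\|_r$-closedness of $\mathcal I$.

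\begin{itemize}
\item \emph{The paper's route.} If $T_k\in\mathcal I$ and $T_k\to T$ in $\|\cdot\|_r$, then $\||T_k|-|T|\|\le\||T_k|-|T|\|_r\le\|T_k-T\|_r\to0$. So $|T|$ is an operator-norm limit of the positive \textit{DW}-Mwc$^*$ operators $|T_k|$, hence is itself \textit{DW}-Mwc$^*$. Part (1) applied to $\pm T\le|T|$ then gives $T\in\mathcal I$.
\item \emph{Your route.} You pass to a rapidly converging subsequence and build a single positive dominant $|S_1|+\sum_k|S_{k+1}-S_k|$ as a norm limit of positive elements of $\mathcal I$. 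This works but is longer.
\end{itemize}

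You state $\pm S\le T$ without justification, and it does need a line. The partial sums lie below $T$ because the positive cone is $\|\cdot\|_r$-closed. Hence $T\pm S_k\ge0$ for every $k$, and letting $k\to\infty$ in the closed cone gives $T\pm S\ge0$.

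What your route buys is that it needs only closedness of the cone and $\||R|\|_r=\|R\|_r$, not the estimate $\bigl||T_k|-|T|\bigr|\le|T_k-T|$. In the Banach lattice $(\mathcal L^r(E,F^{\prime}),\|\cdot\|_r)$ both are equally elementary, so the paper's one-line argument is preferable.

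Your explicit argument for Dedekind completeness of $\mathcal I$ is fine and fills in what the paper leaves to ``therefore''. The point is that the supremum of a nonempty subset of $\mathcal I$ bounded above by some $u\in\mathcal I$ is dominated in modulus by $|d_0|+|u|$ for any element $d_0$ of the set.
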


\begin{proof}
	
(1)  Assume that $T \in \textit{DW-}\textmd{Mwc}^*(E, F^{\prime})$. Let $(x_n)$ be a bounded disjoint sequence in $E$ and let $A$ be a  disjointly weakly compact solid subset of $F$. From the inequalities
 $$\sup_{y\in A}|\langle y,Sx_n\rangle|\leq\sup_{y\in A}\langle |y|,|Sx_n|\rangle\leq\sup_{y\in A}\langle |y|,T|x_n|\rangle=\sup_{u\in A}|\langle u,T|x_n|\rangle|\to0\,\,(n\to\infty)$$it follows that $S\in \textit{DW-}\textmd{Mwc}^*(E, F^{\prime})$. Now, since $S=\frac{1}{2}(T+S)-\frac{1}{2}(T-S)$ with $T+S$, $T-S\in \textit{DW-}\textmd{Mwc}^{*}_{+}(E, F^{\prime})$,  we have $S\in\mathrm{Span}(\textit{DW-}\textmd{Mwc}^{*}_{+}(E, F^{\prime}))$.

 \par (2) It is obvious that  $\mathrm{Span}(\textit{DW-}\textmd{Mwc}^{*}_{+}(E, F^{\prime}))$ is a linear subspace  of $\mathcal{L}^{r}(E, F^\prime)$. To prove that  $\mathrm{Span}(\textit{DW-}\textmd{Mwc}^{*}_{+}(E, F^{\prime}))$ is an order ideal, let $S, T\in\mathcal{L}^{r}(E, F^{\prime})$ be such that $|S|\leq|T|$ and $T\in\mathrm{Span}(\textit{DW-}\textmd{Mwc}^{*}_{+}(E, F^{\prime}))$. Since $T\in\mathrm{Span}(\textit{DW-}\textmd{Mwc}^{*}_{+}(E, F^{\prime}))$, there exist $T_{1}, T_{2}\in\textit{DW-}\textmd{Mwc}^{*}_{+}(E, F^{\prime})$ satisfying $T=T_{1}-T_{2}$. Hence, $\pm T\leq T_{1}+T_{2}$. It follows that $|T|\leq T_{1}+T_{2}\in \textit{DW-}\textmd{Mwc}^{*}_{+}(E, F^{\prime})$. By statement $(1)$, we have $|T|\in\textit{DW-}\textmd{Mwc}^{*}_{+}(E, F^{\prime})$. Since $\pm S\leq|S|\leq|T|$, once again by statement $(1)$, we have  $S\in\mathrm{Span}(\textit{DW-}\textmd{Mwc}^{*}_{+}(E, F^{\prime}))$, which implies that $\mathrm{Span}(\textit{DW-}\textmd{Mwc}^{*}_{+}(E, F^{\prime}))$ is an order ideal in $\mathcal{L}^{r}(E, F^{\prime})$.
  
 \par To prove that $\mathrm{Span}(\textit{DW-}\textmd{Mwc}^{*}_{+}(E, F^{\prime}))$  is $\|\cdot\|_r$-closed in $(\mathcal{L}^{r}(E, F^{\prime}), \|\cdot\|_r)$, let $(T_{k})_{k=1}^{\infty}\subseteq\mathrm{Span}(\textit{DW-}\textmd{Mwc}^{*}_{+}(E, F^{\prime}))$ and $T\in\mathcal{L}^{r}(E, F^{\prime})$ be such that $T_{k}\xrightarrow{\|\cdot\|_r}T$. From the inequality $| |T_k|-|T| |\leq|T_{k}-T|$, it follows
 
 $$\||T_k|-|T|\|\leq\||T_k|-|T|\|_{r}\leq\|T_k-T\|_{r}\to0\,\,\, (k\to\infty).$$Since each $|T_k|\in \textit{DW-}\textmd{Mwc}^{*}_{+}(E, F^{\prime})$ and $\textit{DW-}\textmd{Mwc}^{*}(E, F^{\prime})$ is $\|\cdot\|$-closed in $\mathcal{L}(E, F^{\prime})$, we have $|T|\in\textit{DW-}\textmd{Mwc}^{*}_{+}(E, F^{\prime})$. Therefore, $T\in\mathrm{Span}(\textit{DW-}\textmd{Mwc}^{*}_{+}(E, F^{\prime}))$. That is, 
 $\mathrm{Span}(\textit{DW-}\textmd{Mwc}^{*}_{+}(E, F^{\prime}))$  is $\|\cdot\|_r$-closed in $(\mathcal{L}^{r}(E, F^{\prime}), \|\cdot\|_r)$.

\end{proof}
 

\vskip 6mm


\noindent \textbf{Funding} This work was partly supported by the  Natural Science Foundation of China (Grant
No.12571143).\\




\end{document}